\theoremstyle{plain}
\newtheorem{theorem}{Theorem}
\newtheorem{definition}[theorem]{Definition}
\newtheorem{lemma}[theorem]{Lemma}
\newtheorem{proposition}[theorem]{Proposition}
\newtheorem{remark}[theorem]{Remark}
\newcommand\es{\varnothing}
\newcommand\ol{\overline}
\newcommand \sA{\mathcal{A}}
\newcommand\sQ{{\mathcal Q}}
\newcommand\sH{{\mathcal H}}
\newcommand\RR{{\mathbb R}}
\newcommand\ZZ{{\mathbb Z}}
\newcommand\PP{{\mathbb P}}
\newcommand\tri{\triangle}
\newcommand\De{\Delta}
\newcommand\qq{\qquad}
\newcommand\q{\quad}
\newcommand\resp{respectively}
\newcommand\oo{\infty}
\newcommand\sG{{\mathcal G}}
\newcommand\sC{{\mathcal C}}
\newcommand\sN{{\mathcal N}}
\newcommand\sM{{\mathcal M}}
\newcommand\sK{{\mathcal K}}
\newcommand\de{\delta}
\newcommand\pc{p_{\text{\rm c}}}
\newcommand\pu{p_{\text{\rm u}}}
\newcommand\rev[1]{\textcolor{red}{#1}}
\newcommand\Gd{\what G}
\renewcommand\ell{l}
\newcommand\pd{\partial}
\newcommand\sm{\setminus}
\newcommand\lam{\lambda}
\newcounter{mycount}\newcounter{mycount2}\newcounter{mycount3}
\newenvironment{romlist}{\begin{list}{\rm(\roman{mycount2})}%
   {\usecounter{mycount2}\labelwidth=1cm\itemsep 0pt}}{\end{list}}
\newenvironment{numlist}{\begin{list}{\arabic{mycount3}.}%
   {\usecounter{mycount3}\labelwidth=1cm\itemsep 0pt}}{\end{list}}
\newenvironment{letlist}{\begin{list}{\rm(\alph{mycount})}%
   {\usecounter{mycount}\labelwidth=1cm\itemsep 0pt}}{\end{list}}
\newcounter{newcount1}
\newcommand\diam{{\text{\rm diam}}}
\newcommand\what{\widehat}
\newcommand\nst{non-self-touching}
\newcommand\nt{non-touching}
\numberwithin{equation}{section}
\numberwithin{theorem}{section}
\numberwithin{figure}{section}
\newcommand\dinst{$2\oo$-nst path}
\newcommand{\hsim}{\mathbin{\what\sim}}
\newcommand{\nhsim}{\mathbin{\what\nsim}}
\newcommand\Gm{G_*}
\newcommand\ins{\text{\rm int\hskip.7pt}}
\newcommand\NST{\text{\rm NST}}
\newcommand\ST{\text{\rm ST}}
\newcommand\GD{G_\De}
\newcommand\TD{T_\De}
\newcommand\nuf{\what\nu}
\newcommand\GDd{\what G_\De}
\newcommand\VDd{\what V_\De}
\newcommand\EDd{\what E_\De}
\newcommand\Ext{\text{\rm Ext}}
\newcommand\Ed{\what E}
\newcommand\Vd{\what V}
\title{Non-self-touching paths in plane graphs}
\author{Geoffrey R.\ Grimmett}
\address{Centre for
Mathematical Sciences, Cambridge University, Wilberforce Road,
Cambridge CB3 0WB, UK} 
\email{g.r.grimmett@statslab.cam.ac.uk}
\urladdr{\url{http://www.statslab.cam.ac.uk/~grg/}}
\date{27 January 2024, 20 June 2025}
\keywords{Non-self-touching path, chordless path, induced path, percolation, site percolation, matching graph}
\subjclass[2010]{05C38, 60K35, 82B43}
\begin{document}

\begin{abstract}
A path in a graph $G$ is called \nst\ if two vertices are neighbours in the path if and only if they are
neighbours in the graph. We investigate the existence of doubly infinite \nst\ paths in infinite plane graphs.

The \emph{matching graph} $\Gm$ of an infinite plane graph
$G$ is obtained by adding all diagonals to all faces,
and it plays an important role in the theory of site percolation on $G$. The main result of this paper
is a necessary and sufficient condition on $G$ for the existence of a doubly infinite \nst\ path in 
$\Gm$ that traverses some diagonal. This is a key step in proving, for quasi-transitive $G$, that the
critical points of site percolation on $G$ and $\Gm$ satisfy the strict inequality 
$\pc(\Gm) < \pc(G)$, and it complements the earlier result of Grimmett and Li 
(Random Struct.\ Alg.\ 65 (2024) 832--856),
proved by different methods,
concerning the case of transitive graphs. Furthermore it implies, for quasi-transitive graphs, that
$\pu(G) + \pc(G) \ge 1$, with equality if and only if the graph $\GD$, obtained from $G$
by emptying all separating triangles, is a triangulation. Here,
$\pu$ is the critical probability for the existence of a unique infinite open cluster.
\end{abstract}
\maketitle

\section{Background and main theorem}\label{sec:back}

Some basic facts are presented concerning the existence
in an infinite planar graph $G$ of a certain type of doubly infinite path,
namely a path $\pi$ with the property that two vertices of $\pi$ are neighbours in $G$ if and only 
if they are consecutive in $\pi$. Such paths arise naturally in the theory of site percolation. 

The graphs considered here are assumed to belong to the set
$\sG$ of countably infinite, locally finite, $2$-connected, simple, plane graphs, embedded in 
the plane $\RR^2$ without accumulation points, and moreover such that 
all faces have finite diameter. 
A doubly infinite path 
$\pi=(\pi_i: -\oo < i < \oo)$ of a graph is
called \emph{\nst} if it has the property that $\pi_i\sim\pi_j$ if and only if $|i-j|=1$.
The expression \lq doubly infinite \nst\ path' is abbreviated henceforth to \emph{\dinst}.

\begin{remark}\label{rem:-2}
There appears to be no standard expression for the path-property of being \nst, and we 
adopt this expression for consistency with early work \cite{GL-match}. Possible alternatives
include \lq chordless path' and \lq induced path'. The theory of \nst\ paths
is a matter of potential intrinsic interest in graph theory.
\end{remark}

Which graphs possess a \dinst? We do not have a complete answer to this, but certain cases are described in 
Section \ref{sec:exist}. For example, every $4$-connected $G\in\sG$ has a \dinst, and
every graph $G\in\sG$, embedded in $\RR^2$ in such way that its faces have
uniformly bounded diameter, has a \dinst. 

The \emph{matching graph} $\Gm$ of $G\in\sG$ is obtained from $G$ by adding all diagonals
to all non-triangular  faces (see Figure \ref{fig:mg}); the word \emph{diagonal} shall always mean such
an edge of $\Gm$. 
The principal purpose of this paper is to prove a property of the pair $(G,\Gm)$ of graphs.
Evidently, $\Gm=G$ if and only if $G$ is a triangulation. Note that, while $G$ is planar, 
its matching graph $\Gm$ is planar if and only if $G$ is a triangulation.

The following graph property is important in the theory of site percolation
(see Section \ref{sec:perc}).
 
\begin{figure}
\centerline{\includegraphics[width=0.5\textwidth]{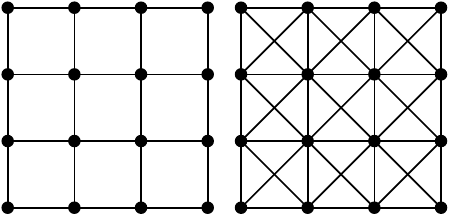}}
\caption{The square lattice $\ZZ^2$ and its matching graph.}\label{fig:mg}
\end{figure}

\begin{definition}
The graph $G\in\sG$ is said to have property $\Pi$ if $\Gm$ has a \dinst\ that includes some
diagonal of some non-triangular face of $G$.
\end{definition}

No triangulation can have property $\Pi$ since a triangulation has no diagonals. 

We call a $3$-cycle $C$ of a connected plane graph a \emph{separating triangle} 
if the bounded component of $\RR^2\sm C$ (termed the \emph{interior} of $C$)
\rev{intersects} one or more edges and/or vertices.
If $C$ is a separating triangle of $G\in\sG$, then 
no \dinst\ may intersect this interior. Thus the interiors of separating triangles may
be removed without changing the property of having a \dinst.
For $G\in\sG$, we write $\GD$ for the subgraph of $G$ obtained by deleting
any vertex/edge lying in the interior of any $3$-cycle of $G$. We shall normally assume that $\GD\in\sG$,
thereby eliminating the possibility that $G$ has an infinite nested sequence of $3$-cycles.
A graph $G\in\sG$ is said to be \emph{$\tri$-empty} if it contains no separating triangle. 

We prove the straightforward fact (in Theorem \ref{thm:A}(b))
that a triangulation $T$ has a \dinst\ if $\TD\in\sG$.
An example of a graph $G\in\sG$ with a separating triangle but without property $\Pi$ is 
given in Figure \ref{fig:nopi2}.
 
 \begin{figure}
\centerline{\includegraphics[width=0.3\textwidth]{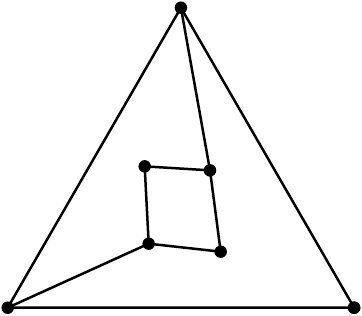}}
\caption{The graph $G\in\sG$ is obtained from the usual triangular lattice by replacing 
one of more  fundamental triangles with a
copy of the above. The ensuing graph cannot have property $\Pi$ since 
no \dinst\ may penetrate any fundamental triangle.}\label{fig:nopi2}
\end{figure}

Here is the main theorem. Its application to percolation theory is outlined in Section \ref{sec:perc}.

\begin{theorem}\label{thm:0}
Let $G\in\sG$ satisfy $\GD\in\sG$, and assume $\GD$ is not a triangulation. 
If  $\Gm$ has a \dinst, then $G$ has property $\Pi$.
\end{theorem}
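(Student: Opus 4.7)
The plan is to start with the given \dinst\ $\pi$ in $\Gm$ and, assuming it does not already use a diagonal, to modify it in a finite region to include one. If $\pi$ uses at least one diagonal, property $\Pi$ already holds; so assume $\pi$ uses only $G$-edges. Then $\pi$ embeds in $\RR^2$ as a simple doubly infinite curve partitioning $\RR^2$ into two unbounded open regions $L$ and $R$. Since the non-triangular faces of $\GD$ coincide with those of $G$, and $\GD$ is not a triangulation, $G$ has a non-triangular face $F$; fix one, and assume without loss of generality that $F \subset \overline L$.

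A structural observation governs the argument: in $\Gm$ the vertex set of every non-triangular face forms a clique, so any \nst\ path in $\Gm$ meets $F$ in at most two vertices, and if in exactly two they must be consecutive in the path and adjacent along $\pd F$. In particular, $\pi$ can touch $F$ in at most two vertices, and if in two, only by traversing a single edge of $\pd F$.

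The modification proceeds by cases. If $\pi$ uses an edge $uv$ of $\pd F$, I would replace a short neighbourhood of $uv$ on $\pi$ by a detour through $F$ that enters and leaves via a diagonal. Otherwise, $\pi$ uses no edge of $\pd F$, and I would invoke the 2-connectedness of $G$ together with the planar embedding (with $F \subset \overline L$) to produce two internally disjoint paths $\sigma_1, \sigma_2$ in $\overline L$ from two non-adjacent vertices $u, v$ of $F$ to two distinct vertices $p_1, p_2$ of $\pi$; then replace the $\pi$-arc between $p_1$ and $p_2$ by the concatenation that runs from $p_1$ along $\sigma_1$ to $u$, jumps via the diagonal $uv$ to $v$, and returns along $\sigma_2$ to $p_2$.

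The principal technical obstacle is to verify that the modified path remains \nst\ in $\Gm$: no detour vertex may be $\Gm$-adjacent (via a $G$-edge or via a diagonal of another non-triangular face) to any non-consecutive vertex of the new path. I expect this verification to require a minimality argument---choose the detour to minimize some complexity measure such as length, enclosed area, or the number of non-triangular faces it crosses---so that any potential \nst-violation would yield a strictly simpler detour, contradicting the minimal choice.
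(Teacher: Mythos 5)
Your high-level plan (keep the given \dinst, modify it on finitely many edges to force a diagonal) is indeed the paper's strategy, and your structural observation about how a \nst\ path of $\Gm$ can meet a face boundary is correct (it is Lemma \ref{lem:4}(a)). But the proposal has a genuine gap exactly where you flag ``the principal technical obstacle'': the verification that the modified path is still a \dinst\ is the entire content of the theorem, and the minimality device you invoke does not obviously exist. A detour vertex can be $\Gm$-adjacent, through a diagonal of some \emph{other} face, to a vertex of the retained infinite tails of $\pi$ arbitrarily far from the modified region, and it is not clear that such a violation can be traded for a ``strictly simpler'' detour: oxbow removal between the detour and a tail may disconnect the two ends or merely relocate the violation. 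The paper avoids any appeal to minimality by working in the facial triangulation $\GDd$, where the nested-cycle structure of triangulations (Lemma \ref{lem:101}, after Jung) allows the path to be dragged towards the face one ring at a time (Lemmas \ref{lem:102}--\ref{lem:103}, via Proposition \ref{lem:main}(b)), and the final insertion of a diagonal (Proposition \ref{lem:main}(a)) is done by a careful case analysis in which the \nst\ property against the tails is controlled by explicit planarity and triangulation lemmas (e.g.\ Lemma \ref{lem:notri}), not by a single extremal choice. Your proposal replaces all of Sections \ref{sec:princ}--\ref{sec:step3} with a hope.

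There are also concrete defects in the two cases as described. In your Case A ($\pi$ traverses an edge $\langle u,v\rangle$ of $\pd F$), any detour that keeps both $u$ and $v$ on the path but makes them non-consecutive already violates the \nst\ condition, since $u\sim v$ in $G$; so one endpoint must be excised and the path re-attached elsewhere, which is precisely the delicate surgery of Section \ref{ssec:either} and cannot be summarised as ``enter and leave via a diagonal''. In Case B, $2$-connectedness (via Menger) gives two disjoint paths from $\pd F$ to $\pi$, but it does not let you prescribe that they attach at two vertices of $\pd F$ that are non-adjacent in $G$ (note that ``non-adjacent along $\pd F$'' is not enough, since $G$ may contain chords of $\pd F$ drawn outside $F$, which is why the paper passes to the exterior cycle $\Ext(\pd F)$), nor that the connectors, after loop-erasure, avoid $\Gm$-adjacencies with each other and with both tails. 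Finally, note that the theorem only requires \emph{some} diagonal, not a diagonal of the chosen face $F$; the paper's iteration exploits this escape clause at every stage, whereas your construction commits to $F$ and thereby makes the unverified \nst\ check even harder.
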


The basic idea of the proof of Theorem \ref{thm:0} is as follows. 
Since  $\GD$ is not a triangulation, it has some face $F$ with four or more edges in its boundary.
Assume $\Gm$ has a \dinst\ $\nu$.  The target is to show that one can make local changes to $\nu$
in order to obtain a \dinst\ $\ol\nu$ that uses some diagonal of $F$. There are some  difficulties in
achieving this, and indeed a lesser target is
achieved that is sufficient for the theorem. The construction is facilitated by working 
not with $G$ directly but with the triangulation $\Gd$
(the \lq facial graph' of Section \ref{sec:face}) obtained from $G$
by adding a site to each non-triangular face, and fully connecting this site to the
boundary cycle. It is then necessary to understand the relationship between \dinst s of $\Gm$ and 
\dinst s of $\Gd$.

One of the reasons for working with $\Gd$ is that, as a triangulation, one may show the existence of
an infinite, nested sequence of cycles with $F$ in their common interior. This permits an iterative
approach to the construction of $\ol\nu$.

Here is a summary of the contents of this article. 
The application of Theorem \ref{thm:0} to percolation is presented in 
Section \ref{sec:perc}. After a section on notation, and the methodological Section \ref{sec:tech},
the principal graph-theoretic Proposition \ref{lem:main} appears in Section \ref{sec:princ}.
The cycle structure of plane graphs is explored in Section \ref{sec:step0}, which ends with the proof
of Theorem \ref{thm:0} (using Proposition \ref{lem:main}). Sections \ref{sec:step2}
and \ref{sec:step3} are devoted to the proof of Proposition \ref{lem:main}.

The proof of Proposition \ref{lem:main} is a somewhat complicated graph-theoretic analysis of
a number of possible cases. 
It is tempting to hope for a neater and more  appetising
proof of Theorem \ref{thm:0}.

\section{Application to site percolation}\label{sec:perc}

The percolation process is a prominent model for connectivity in a random medium. 
The model has emerged as central to the mathematical and physical theories of phase transition,
and its theory is ramified and
complex.  
Percolation comes in two flavours, bond and site, and it is site percolation that is relevant here.
See \cite{GP} for an account of the standard theory of percolation. 

Let $G=(V,E)$ be an infinite connected graph, and let $p\in[0,1]$. 
Each vertex (or \lq site')
$v\in V$ is coloured \emph{black} with probability $p$ and \emph{white} otherwise, different vertices receiving
independent colours. We write $\PP_p$ for the corresponding probability measure. 
We choose some vertex, called the \emph{origin}, and write
$I$ for the event that the origin is the endpoint of some infinite black path. With
$\theta(p)= \PP_p(I)$, there exists a \emph{critical probability} $\pc=\pc(G)\in [0,1]$ such that
\begin{equation}
\theta(p) 
\begin{cases} =0 &\text{if } p<\pc(G),\\
>0 &\text{if } p>\pc(G).
\end{cases}
\end{equation}
The value of $\pc(G)$ is independent of the choice of origin.

The study of weak and strict inequalities for critical probabilities has a long history (see, for example, \cite{jmh61}
and \cite[Sect.\ 10]{K82}).
A general method for proving strict inequalities for critical probabilities, and more generally for critical points of
interacting systems, was described in \cite{AG}. One assumption for a naive application of this method is the
quasi-transitivity of
the underlying graph $G=(V,E)$. 
Recall that $G$ is \emph{quasi-transitive} if its automorphism 
group acts on $V$ with only finitely many orbits.

Since $G$ is a subgraph of $\Gm$, it is elementary that $\pc(\Gm) \le \pc(G)$.
Strict inequality is harder to prove. The following was proved in \cite{GL-match}.

\begin{theorem}[\mbox{\cite[Thm 1.2]{GL-match}}]\label{maintheorem}
Let $G\in\sG$ be quasi-transitive. Then $\pc(G_*)<\pc(G)$ if
and only if $G$ has property $\Pi$.
\end{theorem}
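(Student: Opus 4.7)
The plan is to prove the two implications separately, noting that $G \subseteq \Gm$ gives $\pc(\Gm) \le \pc(G)$ for free, so both directions reduce to understanding when this inequality is strict.

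For the $(\Leftarrow)$ direction, assume $G$ has property $\Pi$, with witnessing \dinst\ $\nu$ in $\Gm$ that traverses a diagonal $e$ of some non-triangular face. I would deploy the enhancement method of \cite{AG}. Introduce a family of percolation models indexed by $\epsilon \in [0,1]$: each vertex of $G$ is independently black with probability $p$, and independently each diagonal is \emph{active} with probability $\epsilon$; a diagonal of $\Gm$ is declared open iff both its endpoints are black and it is active, while a $G$-edge is open iff both its endpoints are black. At $\epsilon = 0$ this recovers site percolation on $G$, and at $\epsilon = 1$ site percolation on $\Gm$. Writing $\theta(p,\epsilon)$ for the origin percolation probability and letting $\pc(\epsilon)$ denote the critical value of $p$ at enhancement level $\epsilon$, the goal is to establish a Russo-type differential inequality of the form $\partial_\epsilon \theta \ge c\,\partial_p \theta$ in a neighbourhood of the critical curve. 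This forces $\pc(\epsilon)$ to be strictly decreasing at $\epsilon = 0$, giving $\pc(\Gm) = \pc(1) < \pc(0) = \pc(G)$.

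For the $(\Rightarrow)$ direction I would argue by contrapositive: assume $G$ lacks property $\Pi$ and fix $p > \pc(\Gm)$. Then $\theta_{\Gm}(p) > 0$, so almost surely $\Gm$ contains an infinite black cluster, and by quasi-transitivity the origin lies in such a cluster with positive probability. A König-compactness and shortcutting argument extracts from this cluster a doubly-infinite \nst\ path of $\Gm$. By hypothesis this path uses no diagonal of $G$, so it is a path of $G$, yielding an infinite black $G$-cluster almost surely and hence $\theta_G(p) > 0$. Thus $p \ge \pc(G)$, and since $p > \pc(\Gm)$ was arbitrary, $\pc(\Gm) \ge \pc(G)$ as required.

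The main obstacle is verifying the \emph{essential-enhancement} hypothesis in the $(\Leftarrow)$ direction: one must exhibit, in a large finite box centred on $e$, a site configuration at which activating $e$ is pivotal for the long connection event driving the differential inequality, with probability uniformly bounded below at $p = \pc(G)$. The \nst\ property of $\nu$ is the decisive ingredient: by painting a long segment $\sigma$ of $\nu$ around $e$ black and its $G$-neighbours off $\sigma$ white, every black connection through the box is forced to travel along $\sigma$, so that $e$ genuinely becomes pivotal; the NST condition precludes internal $\Gm$-shortcuts within $\sigma$, which is exactly what is needed. Quasi-transitivity then yields a uniform lower bound on the pivotal probability across translates, closing the AG argument. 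Extracting the doubly-infinite \nst\ path in the other direction is also delicate, since the shortcutting procedure must be shown to terminate in an infinite path, but this is a standard compactness argument for quasi-transitive plane graphs.
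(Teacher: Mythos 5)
First, a framing remark: the statement you are proving is quoted in this paper from \cite[Thm 1.2]{GL-match} and is \emph{not} proved here, so there is no in-paper proof to compare against; I assess your sketch on its own terms and against the framework the paper rests on. Your $(\Leftarrow)$ direction (property $\Pi$ implies $\pc(\Gm)<\pc(G)$) via the Aizenman--Grimmett enhancement scheme is the right framework, and the recognition that the doubly infinite \nst\ path of $\Gm$ traversing a diagonal $e$ is precisely what furnishes the essential-enhancement configuration is the correct key idea; the details you defer (Russo-type inequality, uniform lower bound via quasi-transitivity) are standard in that machinery. One implementation caveat: for pivotality of $e$ in a box you want the painted segment $\sigma$ to \emph{separate} the reference vertex from the far boundary across $e$ (so the vertex should sit at one end of $\sigma$, not in the middle of it, otherwise both halves of $\sigma\setminus e$ already reach the boundary), and the white-painting should block $G$-connections with all diagonals other than $e$ held inactive; these are adjustments rather than obstructions.

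The $(\Rightarrow)$ direction contains a genuine gap. You assert that from the infinite black cluster of $\Gm$ a ``K\"onig-compactness and shortcutting argument extracts \dots\ a doubly-infinite \nst\ path of $\Gm$,'' and you genuinely need it to be \emph{doubly} infinite: property $\Pi$ is a statement about doubly infinite \nst\ paths only, so a singly infinite \nst\ black path could perfectly well traverse a diagonal without contradicting your hypothesis. But K\"onig plus oxbow removal (the paper's Lemma \ref{lem:cred}) gives only a \emph{singly} infinite \nst\ path; an infinite connected graph need not contain a doubly infinite path at all (a one-sided comb is a counterexample), and even when two disjoint black rays from a common vertex exist, splicing two oxbow-reduced rays at that vertex yields a \nst\ path only when the rays are non-touching. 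This last issue is exactly what forces the paper, already in the \emph{deterministic} setting, to invoke $4$-connectedness or uniformly bounded face diameters (Theorem \ref{thm:A}(a),(c)) and to exhibit a $3$-connected $G\in\sG$ with no \dinst\ whatsoever (Figure \ref{fig:2conn}). Producing a doubly infinite \nst\ \emph{black} path inside the random supercritical cluster is strictly harder than this and is not ``a standard compactness argument for quasi-transitive plane graphs''; this step requires a genuinely new idea that your sketch does not supply.
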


Using Theorems \ref{maintheorem} and \ref{thm:0}, one obtains the following application  to percolation
of the results of this article.

\begin{theorem}\label{thm:main2}
Let $G\in\sG$ be quasi-transitive.
The strict inequality $\pc(\Gm)<\pc(G)$ holds if only if $\GD$ is not a triangulation.
\end{theorem}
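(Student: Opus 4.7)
The plan is to reduce Theorem~\ref{thm:main2} to Theorems~\ref{maintheorem} and~\ref{thm:0}. Since Theorem~\ref{maintheorem} identifies $\pc(\Gm)<\pc(G)$ with property~$\Pi$, it suffices to show that, for quasi-transitive $G\in\sG$, property~$\Pi$ holds if and only if $\GD$ is not a triangulation.

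For the \emph{only if} direction, I would argue by contrapositive. Assume $\GD$ is a triangulation, and suppose, toward a contradiction, that $\Gm$ has a \dinst\ $\nu$ containing some diagonal $d$, lying in a non-triangular face $F$ of $G$. Since $\GD$ is a triangulation, $F$ is not a face of $\GD$, so some boundary edge of $F$ is interior to a separating $3$-cycle $C$ of $G$; planarity of $G$ then forces $F$ (and hence $d$) into the closed bounded region of $C$. Moreover, since the three vertices of $C$ are pairwise adjacent in $G$ and so cannot be joined by a diagonal, at least one endpoint of $d$ lies strictly inside $C$. The no-accumulation-point condition implies this bounded region contains only finitely many vertices, while no edge of $\Gm$ crosses an edge of $G$. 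Consequently the doubly infinite path $\nu$ must enter and exit the bounded region through two distinct vertices $u,v$ of $C$, which are adjacent in $\Gm$ yet separated on $\nu$ by strict-interior vertices—contradicting \nst-ness. Hence property~$\Pi$ fails.

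For the \emph{if} direction, assume $\GD$ is not a triangulation and verify the hypotheses of Theorem~\ref{thm:0}. The condition $\GD\in\sG$ follows from quasi-transitivity, which precludes an infinite nested family of $3$-cycles (incompatible with a finite vertex-orbit set together with local finiteness). The more substantive condition is that $\Gm$ admit a \dinst; this I would obtain via the facial graph $\Gd$, which is a $2$-connected plane triangulation lying in $\sG$ and inheriting quasi-transitivity from $G$ (the automorphism group of $G$ lifts to $\Gd$, permuting face-vertices according to face-orbits), hence with uniformly bounded face-diameters. By the existence criterion recalled in Section~\ref{sec:exist}, $\Gd$ possesses a \dinst, and the correspondence between \dinst s of $\Gd$ and of $\Gm$ developed in Section~\ref{sec:step0} transports this to a \dinst\ of $\Gm$. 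Theorem~\ref{thm:0} then supplies property~$\Pi$, and Theorem~\ref{maintheorem} concludes $\pc(\Gm)<\pc(G)$.

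The principal obstacle is producing a \dinst\ of $\Gm$ in the \emph{if} direction: because $\Gm$ is generally non-planar, the planar existence criteria are not directly available, and one must detour through the planar triangulation $\Gd$ and the $\Gm$–$\Gd$ correspondence.
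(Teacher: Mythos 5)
Your reduction, via Theorem~\ref{maintheorem}, to showing that property~$\Pi$ holds if and only if $\GD$ is not a triangulation is sound, but it differs from the paper's proof in both directions.

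For the `only if' direction your argument is essentially correct: if $\GD$ is a triangulation then every non-triangular face $F$ of $G$ must lie inside some separating triangle $C$ (otherwise $F$ would survive as a face of $\GD$), so every diagonal has an endpoint strictly inside $C$, and a \dinst\ visiting that endpoint would have to pass through two distinct (hence mutually adjacent) vertices of $C$ in non-consecutive positions, violating the \nst\ property. The paper takes a shorter route that bypasses property~$\Pi$ entirely: if $\GD$ is a triangulation then $(\GD)_*=\GD$, and since $\pc(G)=\pc(\GD)$ and $\pc(\Gm)=\pc((\GD)_*)$ by elementary path surgery across separating triangles, $\pc(\Gm)=\pc(G)$ follows directly without invoking Theorem~\ref{maintheorem}. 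Your version proves strictly more but needs the extra argument.

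The `if' direction has a genuine gap at the key step of producing a \dinst\ of $\Gm$. You propose to obtain one in the facial triangulation $\Gd$ (equivalently $\GDd$, via Theorem~\ref{thm:A}(b) and \eqref{eq:noint2}) and then to transport it to $\Gm$, but the correspondence of Lemma~\ref{lem:4}(b) runs the wrong way: $\sigma\colon \NST(\Gm)\hookrightarrow\NST(\Gd)$ is an injection \emph{into} $\NST(\Gd)$, and it is not surjective in general since $\NST(\Gd)=\sigma(\NST(\Gm))\cup\NST_2(\Gd)$. A \dinst\ of $\Gd$ lying in $\NST_2(\Gd)$ does not pull back under $\sigma$, and the $\phi$-removal of Lemma~\ref{lem:4}(d) is stated and proved only for \emph{finite} \nst\ paths: to apply it to a doubly infinite path one would need to show that the (potentially infinitely many) removals do not collapse one or both ends, and nothing in the paper supplies that. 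The paper sidesteps the issue entirely: Theorem~\ref{thm:A}(c) and (d) assert the existence of a \dinst\ directly in $\Gm$ (the sector argument and oxbow removal in that proof apply to $\Gm$, drawn with each diagonal inside its face), and the proof of Theorem~\ref{thm:main2} invokes precisely that. Replacing your $\Gd$ detour by a direct appeal to Theorem~\ref{thm:A}(d) closes the gap.
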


This extends 
the earlier result of \cite[Thm 1.4]{GL-match} which was restricted to transitive graphs, 
for which the proof is different and less complicated.

\begin{proof}[Proof of Theorem \ref{thm:main2} using Theorem \ref{thm:0}]
Since $G$ is assumed quasi-transitive, we have that $\GD$ is quasi-transitive
and belongs to $\sG$
(this is easily seen, and a formal statement with proof appears at Theorem \ref{thm:A}(e)). 
Every infinite path of $G$ contains an infinite path of $\GD$, and conversely an infinite path of
$\GD$ is an infinite path of $G$. Therefore, $G$ and $\GD$ 
(\resp, their two matching graphs) have equal critical probabilities. 

If $\GD$ is a triangulation, then its matching graph is also $\GD$, so that their
critical probabilities are equal.
Assume that $\GD$ is not a triangulation.
By Theorems \ref{thm:0} and \ref{maintheorem}, it suffices to show that $\Gm$ has a \dinst.
This is included in \cite[Lemma 4.3(a)]{GL21a}, and is given explicitly in Theorem \ref{thm:A}(d).
\end{proof}

Non-self-touching paths were introduced in \cite{AG} where they were called \lq stiff paths` (see
also \cite{BBR,GL-match} and \cite[p.\ 66]{GP}).

Suppose $H$ is a connected, quasi-transitive graph. Let $N$ be the number of infinite black clusters of site percolation on $H$.
It was proved in \cite{HP, SR99} that there exists $\pu(H)\in [0,1]$ such that
\begin{equation*}
\PP_p(N=1) = \begin{cases}
0 &\text{if } p < \pu(H),\\
1 &\text{if } p > \pu(H).
\end{cases}
\end{equation*}
Evidently, $\pc (H)\le \pu(H)$.
Let $G\in\sG$ be quasi-transitive. It is known that $\pu(G)+\pc(\Gm)=1$ 
(see \cite[Thm 1.1]{GL21a}), and it follows
by Theorem \ref{thm:main2} that $\pu(G)+\pc(G)\ge 1$ with equality
if and only if $\GD$ is a triangulation. 

\section{Notation}\label{sec:not}

A graph is denoted $G=(V,E)$ where $V$ is the vertex-set and $E$ the edge-set. Graphs 
considered here are mostly assumed to
be countable (that is, finite or countably infinite),  and simple (in that they have neither loops nor parallel edges);
a possible exception to the last arises in the case of matching graphs, which may contain
pairs of parallel diagonals created in abutting faces.
An edge between vertices $u$, $v$ is denoted $\langle u,v\rangle$; if this edge exists, we say that
$u$ and $v$ are \emph{adjacent} and write $u \sim v$. The edge $\langle u,v\rangle$
is said to be \emph{incident} to its endvertices.
The \emph{degree} of a vertex is the number of its incident edges, and $G$ is \emph{locally finite}
if all degrees are finite. Given $A,B\subseteq V$, $A$ is said to be 
\emph{adjacent} to $B$, written $A \sim B$, if there 
exist $a\in A$ and $b\in B$ such that $a \sim b$.

A \emph{walk} in $G$ is an alternating sequence $w=(\dots,w_0,e_0,w_1,e_1,\dots)$ where $w_i\in V$ and 
$e_i=\langle w_i,w_{i+1}\rangle\in E$ for all $i$;
if $G$ is simple, the edges $e_i$ may be omitted from the definition. The walk $w$ is a \emph{path} if the $w_i$ are distinct.
The path $w$ is \emph{\nst} if $w_i\sim w_j$ if and only if $|i-j|=1$.  A path $w$ is called a \emph{\dinst}
if it is doubly infinite and \nst; we denote by $\NST(G)$ the set of all \dinst s of $G$.
The graph-distance $d_G(u,v)$ between vertices $u$, $v$ is the minimal
number of edges in paths from $u$ to $v$; for $A,B\subseteq V$; we set
$d_G(A,B)=\min\{d_G(a,b): a\in A,\, b\in B\}$.
Two walks $\pi=(\pi_i)$, $\nu=(\nu_j)$ are said to be
\emph{non-touching} if $d_G(\pi_i,\nu_j)\ge 2$ for every pair $i$, $j$.
A path from $u$ to $v$ is called a \emph{geodesic} if it has exactly  $d_G(u,v)$ edges. 
We note that a finite path is \nst\ if it is a geodesic; a similar statement holds for 
infinite paths. 

A \emph{cycle} of $G$ is a finite walk of the form $w=(w_0,e_0,w_1,\dots,w_{n})$
such that $w_0=w_{n}$ and the sub-walk $(w_0,e_0,w_1,\dots, w_{n-1})$ is a path. Such a cycle
has \emph{length} $n$ and is called an $n$-\emph{cycle}. The set of cycles of $G$
is denoted $\sC(G)$.

Let $k\ge 1$. An infinite graph $G$ is called \emph{$k$-connected} if, for all $v\in V$, there exist at least $k$
infinite paths starting from $v$ that are pairwise vertex-disjoint (except for their common starting point $v$).
By Menger's theorem (see, for example, \cite[Thm 1.1]{ACM}), $G$ is $k$-connected if and only if, for all $v\in V$, there exists no set $A\subseteq V\sm\{v\}$
of cardinality strictly less that $k$ whose removal leaves $v$ in a finite subgraph of $G$.
For further discussion and references, see \cite[Sect.\ 1]{ACM} and \cite{Dirac63,halin70}.

A graph $G=(V,E)$ is \emph{planar} if it may be drawn in the plane in such a way that 
edges cross only at vertices. An embedded planar graph is called \emph{plane}.
A point $x\in\RR^2$ is called a \emph{vertex accumulation point} of $G$ 
if it is an accumulation point of $V$, and an 
\emph{edge-accumulation point} if every neighbourhood
of $x$ intersects some edge not incident with $x$.
We shall consider only plane graphs with neither  vertex- nor edge-accumulation points.
The number of \emph{ends} of a graph is
the supremum of the number of infinite components obtained by deletion of finite sets of vertices.

A \emph{face} of a one-ended, plane graph $G=(V,E)$ is a connected component of $\RR^2\sm G$.
By \cite[Thm 3]{Kr},  if $G$ is $2$-connected,
the boundary of every face $F$ is a cycle of $G$, denoted $\pd F$. The \emph{size} of
the face $F$ is the number of edges in $\pd F$, and its (Euclidean) \emph{diameter}
is defined as 
\begin{equation*}
\diam(F)=\sup\{|x-y|: x,y \in F\}
\end{equation*}
where $|\cdot|$ denotes Euclidean distance.
Let $C$ be a cycle of $G$, and write $\ins(C)$ for the  (open) bounded component of
$\RR^2\sm C$, and $\ol C =  C\cup \ins(C)$. We write $\ins(C)$ also for the subgraph of $G$ 
obtained by deleting all vertices not belonging to $\ins(C)$. A cycle $C$ is called
\emph{facial} if it is the boundary of some face.

We denote by $\sG$ the set of countably infinite, $2$-connected, locally finite, 
simple, plane graphs, embedded in 
the plane without vertex/edge-accumulation points, such that all faces have finite diameter
(whence, in particular, such $G$ are one-ended). 

We call a $3$-cycle of $G$  a \emph{separating triangle} if $\ins(C)$ intersects
one or more edges and/or vertices of $G$.
For $G\in\sG$, we write $\GD$ for the subgraph of $G$ obtained by deleting
any vertex/edge lying in the interior of any separating triangle of $G$. Thus $\GD$ has no separating
triangle, and we say that $\GD$ is $\tri$-\emph{empty}. We shall speak of $\GD$ as 
being obtained from $G$ by \lq emptying the separating triangles'.
Since  a \dinst\ of $G$ intersects the interior of no separating triangle, we have that
\begin{equation}\label{eq:noint}
\NST(G)=\NST(\GD).
\end{equation}

The one-ended, plane graph $G$ is a \emph{triangulation} if every face is bounded by a $3$-cycle. 
Let $u,v\in V$ be such that $u\nsim v$ but there exists some face
$F$ with $u,v\in \pd F$; we may choose to add to $F$ the further edge $\langle u,v \rangle$, and we call this
a \emph{diagonal} of $G$ (or of $\Gm$), denoted $\de(u,v)$. 

The \emph{matching graph} $\Gm$ of $G\in\sG$
is obtained from $G$ by adding all diagonals
to all non-triangular faces. See Figure \ref{fig:mg}, and note that $\Gm$ is not generally planar.
We shall work also with the so-called \lq facial graph' of $G$; see Section \ref{sec:face}.
The matching graph was introduced by Sykes and Essam \cite{SE63} in the context of
percolation theory. 

The reasons for the assumption of $2$-connectivity are as follows. Let $G$
be $1$-connected but not $2$-connected. Then there exist cutpoints $c$ such that $G\sm\{c\}$ has one or more finite components. Such components cannot be relevant to the occurrence or not of property $\Pi$ since
no \dinst\ (of either $G$ or $\Gm$)
may access them. Linked to this is the fact that site percolation on $G$ possesses 
an infinite cluster if and only  $G\sm\{c\}$ contains such a cluster. Moreover, as remarked above, 
$2$-connectivity is needed for the faces of $G$ to be bounded by cycles.

\begin{remark}\label{rem:planar}
We close this section with a note about the distinction between planar and plane graphs.
A planar graph $H$ is said to have property $\sN$ if it possesses a \dinst. Evidently $\sN$
is a graph property of $H$ which is independent of the choice of plane embedding.
The situation for matching graphs is potentially more complicated since the
diagonals of a plane graph depend on its facial structure and hence on its embedding.
If $H$ is $3$-connected, its embedding is unique
in the sense of the cellular-embedding theorem of \cite[p.\ 42]{Moh}; 
see also \cite[Thm 2.1]{GL21a}. Therefore, $\sN$ is a graph property in this case. 

The picture is more complicated if $H$ has connectivity $2$. 
Assume this, and in addition \emph{that $H$ is quasi-transitive}. Let $G\in\sG$ be a plane embedding of $H$. 
By the proof of Theorem 8.25 in \cite[Sect. 8.8]{LP}, there exists a $3$-connected 
plane graph $G'$ from which $G$ is obtained by adding certain \lq dangling loops'.
Since $G'$ is $3$-connected, by the cellular-embedding theorem its embedding is
unique as above,
so that every embedding of $H$ gives rise to the same $G'$. Furthermore,
one sees from the relationship between $G$ and $G'$ that $G$ has $\sN$ if and only if $G'$ has $\sN$.
In conclusion, for $2$-connected, \emph{quasi-transitive} planar graphs, property $\sN$ is a graph property
and is independent of the choice of plane embedding. We shall see in Theorem \ref{thm:A}(d) that
one such embedding, and hence all such embeddings, have property $\sN$.
\end{remark}

\section{Three techniques}\label{sec:tech}

\subsection{Oxbow removal}\label{sec:ox}

Paths can fail to be \nst\ through the existence of pairs of vertices that are not neighbours in the path 
but are neighbours in the graph. It is useful to have a method for extracting a \nst\ path from a path containing such vertex-pairs. The method in question was used in \cite{GL-match}, and is termed \emph{oxbow removal}.
We shall make use of the following extract from \cite[Lemma 4.1(b)]{GL-match}.

\begin{lemma}\label{lem:cred}
Let $H$ be a simple, plane graph embedded in $\RR^2$.
Let $\pi$ be a finite (\resp, infinite) path with endpoint $v$.
There exists a non-empty subset 
$\pi'$ of the vertex-set of $\pi$ that forms a finite (\resp, infinite)  \nst\ path of $H$ starting at $v$.
If $\pi$ is finite, 
then $\pi'$ may be chosen with the same endvertices as $\pi$.
\end{lemma}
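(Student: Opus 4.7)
The plan is to construct $\pi'$ by a greedy ``farthest-jump'' procedure, which is the oxbow-removal method referred to in the paper. Writing $\pi=(\pi_0,\pi_1,\dots)$ with $v=\pi_0$, I would set $i_0:=0$ and then recursively define
\[
i_{k+1}:=\max\bigl\{\,i>i_k:\pi_{i_k}\sim\pi_i\text{ in }H\,\bigr\},
\]
stopping if this set is empty. The candidate output is $\pi':=(\pi_{i_0},\pi_{i_1},\dots)$.

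The argument then splits into three short checks. First, the greedy step is well-defined whenever $i_k+1$ is still a valid index of $\pi$: the set in question then contains $i_k+1$, because $\pi_{i_k}$ and $\pi_{i_k+1}$ are consecutive in $\pi$ and hence adjacent in $H$; and the maximum is attained because $H$, as a plane graph embedded in $\RR^2$ without edge-accumulation points, is locally finite, so $\pi_{i_k}$ has only finitely many neighbours among the vertices of $\pi$. Second, $\pi'$ is a path from $v$ in $H$: the $i_k$ strictly increase, so the vertices $\pi_{i_k}$ are distinct, and consecutive ones are adjacent in $H$ by construction. Third, $\pi'$ is \nst: if $\pi_{i_k}\sim\pi_{i_\ell}$ with $\ell\ge k+2$, then $i_\ell$ lies in the set that defines $i_{k+1}$ yet satisfies $i_\ell>i_{k+1}$, contradicting the maximality used to define $i_{k+1}$.

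For the endpoint claim, suppose $\pi$ is finite with other endpoint $\pi_n$. The greedy rule produces a new index whenever $i_k<n$ (because $i_k+1\le n$ is available), and the new index never exceeds $n$; so the strictly increasing, bounded sequence $(i_k)$ must hit $n$ at some step $K$, giving $\pi'_K=\pi_n$ as required. When $\pi$ is infinite, the rule never terminates, so $\pi'$ is infinite. The only place any care is needed is in justifying that the max in the greedy step exists, i.e.\ that $H$ is locally finite; the remainder is essentially a two-line contradiction argument. An alternative but more clumsy route is to iteratively excise single oxbows $(\pi_i,\pi_{i+1},\dots,\pi_j)$ whenever $\pi_i\sim\pi_j$ with $j\ge i+2$, but this requires separate treatment of the infinite case, whereas the greedy formulation handles both cases uniformly and makes the \nst\ property immediate from the choice rule.
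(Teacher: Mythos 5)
Your proposal is correct and takes a genuinely different route from the paper's. The paper's proof finds, at each step, the \emph{earliest} pair of indices $I<J$ with $J\ge I+2$ and $\pi_I\sim\pi_J$, deletes the oxbow $(\pi_{I+1},\dots,\pi_{J-1})$, and iterates; your version processes the path once from $v$, always jumping to the farthest later adjacent vertex. The two procedures produce the same output (both are the unique ``induced subpath reached greedily from $v$''), but your formulation has two concrete advantages: it treats the finite and infinite cases uniformly, and it makes the \nst\ property a one-line consequence of the maximality in the choice rule, whereas the paper's iterative version leaves the convergence of the infinite iteration tacit. One small wrinkle you handle more honestly than the paper does: you observe that the greedy max exists only because $H$ is locally finite, which you extract from the absence of accumulation points. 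The lemma as literally stated does not assume local finiteness (or the absence of accumulation points), and without it the statement is actually false --- e.g.\ the infinite ``fan'' graph, with a single vertex $v$ joined to every vertex of a one-sided ray, is a plane graph in which no infinite \nst\ path starts at $v$, and the paper's oxbow iteration likewise fails to converge there. In the paper's intended setting every graph is locally finite, so both proofs go through; you are simply being explicit about an assumption the paper uses silently.
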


The related process of \lq loop-erasure'
is familiar in graph theory and probability; see, for example, \cite[Sect.\ 2.2]{G-pgs}.
As noted in Section \ref{sec:not}, a geodesic is \nst.
By Lemma \ref{lem:cred}, every locally finite, infinite, connected, simple
 graph possesses a singly infinite \nst\ path.

\begin{proof}
Let $\pi=(v_0,v_1,v_2, \dots)$ be a path from $v=v_0$, either finite or infinite. 
We start at $v_0$ and move along $\pi$ in 
increasing order of vertex-index. 
Let $J$ be the least $j$ such that there exists $i\in\{0,1,\dots,j-2\}$ 
with $v_i\sim v_J$, and let $I$ be the earliest such $i$. 
We delete from $\pi$ the subpath $(\pi_{I+1},\dots,\pi_{J-1})$ (which is termed an \emph{oxbow}),
thus obtaining a new path $\pi_1$ starting at $v$.
If $\pi$ is finite then $\pi_1$ has the same endvertices as $\pi$.
This process is iterated until no oxbows remain. 
\end{proof}

\subsection{Existence of \dinst s}\label{sec:exist}

We present an elementary theorem concerning the existence of \dinst s. 
Recall the graph $\GD$, obtained from $G$ by emptying all $3$-cycles; see before Theorem \ref{thm:0}.

Here is some notation.
A face $F$ of $G\in\sG$ satisfying $0\notin \ol F$ is called $\zeta$-\emph{acute}
if there exists a sector $S$ of $\RR^2$ with vertex $0$ and angle $\zeta$ such that
$F\subseteq S$.

\begin{theorem}\label{thm:A}\mbox{\hfil}
\begin{letlist}
\item
Let $G$ be an infinite, connected, plane graph such that $\GD$ is $4$-connected. Then $G$ contains a \dinst.

\item
Every infinite, $\tri$-empty, triangulation $T$ contains a \dinst.

\item
Let $G\in\sG$. Suppose there exists $\zeta\in(0,\frac12\pi)$ such that $F$ is $\zeta$-acute
for all but finitely many faces $F$. Then $G$ and $\Gm$ have \dinst s.

\item
If $G\in\sG$ is quasi-transitive, then $G$  and $\Gm$ have \dinst s.

\item
If $G\in\sG$ is quasi-transitive, then $\GD\in\sG$ and $\GD$ is quasi-transitive.
\end{letlist}
\end{theorem}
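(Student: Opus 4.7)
\emph{Plan.} The common strategy for parts (a)--(d) is to construct a \dinst\ by joining two infinite \nst\ rays at a common vertex and then pruning any residual chord-edges via oxbow removal (Lemma \ref{lem:cred}). The existence of singly infinite \nst\ rays is automatic (Lemma \ref{lem:cred} applied to any infinite path from a vertex), so the burden is to ensure that the pairing of rays leaves no offending chord after pruning, while keeping the result doubly infinite. By equation (\ref{eq:noint}), it suffices throughout to produce a \dinst\ in $\GD$.

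For (a), I would fix $v\in V(\GD)$ and, using the $4$-connectivity of $\GD$, choose four infinite paths emanating from $v$ that are pairwise vertex-disjoint off $v$, thinning each to an \nst\ ray. The plane embedding cyclically orders these rays at $v$; two rays non-adjacent in the cyclic order combine to a doubly infinite candidate path, and chord-edges between them can be excised by oxbow removal while preserving the doubly infinite extent, using the remaining two \emph{spare} rays as re-routing reservoirs whenever an excision would otherwise swallow an infinite tail. For (b), the $\tri$-empty hypothesis forces every $3$-cycle of $T$ to bound a face, severely restricting where chords can appear along a path; the same two-ray construction goes through, with the $2$-connectivity inherent in $\sG$ supplying the two disjoint rays and the face-restriction on chords making the oxbow pruning manifestly finite. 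For (c), $\zeta$-acuteness with $\zeta<\pi/2$ means that outside a sufficiently large disc every face (and thus every diagonal of $\Gm$) lies in an angular sector of aperture strictly less than $\pi/2$; two geodesic rays from the origin in opposite directions cannot then be bridged by any edge of $G$ or diagonal of $\Gm$ beyond that disc, leaving only finitely many local chords to remove. For (d) I would reduce to (c): a quasi-transitive $G\in\sG$ has an equivariant re-embedding, obtained from the $3$-connected skeleton of Remark \ref{rem:planar} via the embedding-uniqueness theorem \cite[Thm 2.1]{GL21a}, in which the finitely many orbits of faces have uniformly bounded Euclidean diameter, so $\zeta$-acuteness holds for all but finitely many faces.

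For (e), $\Aut(G)$ preserves separating triangles and their interiors (a plane-graph automorphism sends a separating triangle to a separating triangle), so the emptying operation is $\Aut(G)$-equivariant; consequently $\Aut(G)$ descends to a subgroup of $\Aut(\GD)$ acting on $V(\GD)$ with at most the original number of orbits, and local finiteness, $2$-connectivity, and finite face diameter are inherited from $G$. I expect (a) to be the principal obstacle: the argument that oxbow removal on chord-edges between two infinite rays does not collapse the candidate to a finite path is delicate, and exploits the $4$-connectivity hypothesis precisely through the two spare rays, without which there would be no mechanism for rescuing the doubly infinite extent after excising a chord that spans deep into both rays. Parts (c)--(e) should be conceptually straightforward, and (b) should follow by a cleaner version of the (a) template, exploiting the strong restriction on chords in a $\tri$-empty triangulation.
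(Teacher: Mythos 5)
Your parts (a) and (b) contain genuine gaps. In (a), the device of excising chord-edges between the two chosen rays while using the two spare rays as ``re-routing reservoirs'' is not an argument: nothing is said about how a re-route would be carried out, and if chords between the opposite rays could occur in infinite number the pruning might never stabilise. The point you are missing is that such chords cannot occur at all: if $x\in\pi_1^-$, $y\in\pi_3^-$ and $x\sim y$, then the edge $\langle x,y\rangle$ together with the segments of $\pi_1,\pi_3$ from $v$ to $x$ and $y$ forms a cycle, and by the clockwise ordering at $v$ one of the two spare rays $\pi_2,\pi_4$ issues from $v$ into its interior; being vertex-disjoint from $\pi_1\cup\pi_3$ off $v$, that infinite ray is trapped in a bounded region, contradicting local finiteness and the absence of accumulation points. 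Hence $d_{\GD}(\pi_1^-,\pi_3^-)\ge 2$, and one needs only oxbow removal inside each ray separately before joining at $v$; this is exactly the paper's proof, and no cross-chord excision arises. In (b), your plan to use only the $2$-connectivity inherent in $\sG$ together with two rays fails: in a $\tri$-empty triangulation two disjoint rays may be joined by infinitely many chords (for instance, two adjacent horizontal lines of the triangular lattice), so the claim that the $\tri$-empty hypothesis makes the pruning ``manifestly finite'' is false. The paper instead proves that an infinite $\tri$-empty triangulation is $4$-connected (a minimal $3$-cut $\{a,b,c\}$ supplied by Menger's theorem must, by minimality and the triangulation property, span a separating triangle) and then applies part (a).

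In (c) the nontrivial step is the existence of singly infinite paths eventually confined to two opposite sectors; your appeal to ``two geodesic rays from the origin in opposite directions'' presumes this, whereas the paper proves it (if no ray stays in the sector $S_+$, there are cross-arcs of $S_+$ at unbounded distance from $0$, each lying inside a single face, contradicting that only finitely many faces meet both bounding half-lines). Your (d) is essentially the paper's route, except that in the hyperbolic case the equivariant embedding bounds the hyperbolic, not Euclidean, diameters of faces, and it has accumulation points on the boundary circle; the paper composes with the radial map $f(r,\theta)=(r/(1-r),\theta)$ of the Poincar\'e disk to obtain an embedding in $\RR^2$ without accumulation points whose faces have uniformly bounded Euclidean diameter, to which (c) applies. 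In (e), the assertion that $2$-connectivity and boundedness of faces are simply ``inherited'' by $\GD$ is not automatic, since vertices are deleted: the paper reroutes two disjoint infinite rays of $G$ around the maximal separating triangles (whose existence again uses quasi-transitivity and the absence of accumulation points) to establish $2$-connectivity of $\GD$; your equivariance observation for quasi-transitivity itself is fine and agrees with the paper.
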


The conditions of (a) and (c) are sufficient but evidently not necessary for the existence of a \dinst.
Instances of \nst\ paths are provided by geodesics, and the existence of infinite geodesics has been explored in
several articles including \cite{BIS,SMartin,Wat}. Figure \ref{fig:2conn} contains an illustration of
a $3$-connected $G\in\sG$ such that neither $G$ nor its matching graph has a \dinst.

\begin{figure}
\centerline{%\raisebox{15pt}{\includegraphics[width=0.4\textwidth]{2conn2.pdf}}\qquad
\includegraphics[width=0.45\textwidth]{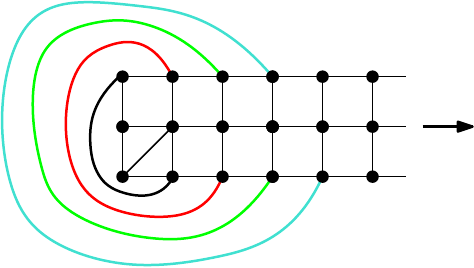}}
\caption{A $3$-connected graph $G\in\sG$ without separating triangles 
such that neither $G$ nor $\Gm$ has a \dinst. Each vertex in the upper horizontal line is joined
to the vertex one unit to its right in the lower line. A diagonal has been added to ensure the graph is 
truly $3$-connected.}\label{fig:2conn}
\end{figure}

\begin{proof}
(a) Let $G=(V,E)$ be as stated. By the $4$-connectedness of $\GD$,
for $v\in V$, there exist four infinite paths of $\GD$
from $v$ that are pairwise vertex-disjoint except for the point $v$. 
Label these $\pi_i$ in a clockwise manner,
and write $\pi_i^-=\pi_i\sm\{v\}$. 
Then $d_{\GD}(\pi_1^-,\pi_3^- ) \ge 2$.
For $i=1,3$, the path $\pi_i^-$ may be reduced by oxbow removal 
(see Lemma \ref{lem:cred}) to a 
singly infinite \nst\ path $\nu_i$ with the same endvertex as $\pi_i$.
The path $\nu:=\nu_1 \cup \{v\}\cup\nu_3$ contains the required
\dinst. On adding the contents of the original triangles back into $\GD$,
we see that
$\nu$ is a \dinst\ of $G$.

(b)
Let $T=(V,E)$ be as in the statement of the theorem.
Since $T$ is $\tri$-empty, it is $4$-connected (see, for example, \rev{\cite[Sect.\ 1]{BKK}} 
and \cite[p.\ 91]{KU}), and the claim
follows by part (a).

For the sake of completeness, we include a sketch proof of the $4$-connectedness of \rev{such} $T$. 
Suppose that $T$ is not $4$-connected. It is standard that, as a triangulation,
$T$ is $3$-connected.
Therefore, there exists $v\in V$ such that
the maximum number of infinite paths from $v$ that are pairwise vertex-disjoint (except at $v$) is exactly $3$.
By Menger's theorem, there exists a triple $A=\{a,b,c\}$ of distinct vertices (with $v\ne a,b,c$) such that every 
infinite path from $v$ intersects $A$, and $A$ is minimal with this property. 

Let $C$ be the (finite) connected component containing $v$ in the graph $T$ with $A$ deleted.
Since $A$ is a minimal cutset, there exist $a',b',c'\in C$ such that $\langle a,a'\rangle,  \langle b,b'\rangle,
\langle c,c'\rangle $ are edges of $T$. Since $T$ is a maximal triangulation
(in that no edge may be added to to $T$ without contradicting planarity), the edges 
$\langle a,b\rangle,  \langle b,c\rangle,
\langle c,a\rangle$ exist in $T$. That is, $A$ is a separating triangle. 
By assumption, $T$ has no separating triangle, and therefore $T$ is $4$-connected.

(c) We outline the proof, which is an adaptation of that of \cite[Lemma 4.3(a)]{GL-match}. 
Suppose the condition holds, and let $L_\theta$ denote the singly infinite
straight line from $0$ inclined at clockwise angle $\theta$ to the $x$-axis $X$. 
Let $S_+$ be the closed sector between $L_0$ and $L_\zeta$ (clockwise), and let $I_+$ be the property that
$G$ has some singly infinite path $\pi_+$ lying within $S_+$. If $I_+$ fails to hold, there 
exists a family $\sK$ of arcs of $S_+$ ($\subseteq\RR^2$), each with endpoints in $L_0$ and $L_\zeta$,
 such that 
(i) each $\kappa\in \sK$
intersects no edge of $G$, and (ii) the Euclidean distances $d(0,\kappa)$ are unbounded
as $\kappa$ ranges over $\sK$. Each $\kappa\in \sK$ lies in the interior of some face of $F$.
Since there exist only finitely many faces that intersect both $L_0$ and $L_\zeta$, 
the statement $I_+$ must hold. Write $\nu_+$ for a \nst\ path obtained from $\pi_+$ by oxbow removal
(see Lemma \ref{lem:cred}).

By a similar argument with $S_+$ replaced by $S_-:= -S$
(the sector bounded by $L_\pi$ and $L_{\pi+\zeta}$), 
$G$ has some singly infinite, \nst\ path $\nu_-$
lying in $S_-$. Since $\pi-\zeta > \frac12\pi>\zeta$, the set $\sA$ of faces that 
intersect both $S_-$ and $S_+$ is finite. 
Find a shortest path $\pi$ of $G$ that connects $\nu_+$ and $\nu_-$ and intersects no $F\in\sA$. The union
$\nu_-\cup\pi \cup \nu_+$ contains (after oxbow removal) a \dinst.

The same argument applies to the matching graph $\Gm$.

(d)
Let $H$ be quasi-transitive, and consider its plane embeddings
that belong to $\sG$. By Remark \ref{rem:planar}, either all or no plane embeddings
(\resp, their matching graphs) have \dinst s.
Since $H$ is quasi-transitive, it may be embedded in either the Euclidean or
hyperbolic plane (denoted $\sH$) in such a way that its edges are geodesics and its automorphisms extend 
to isometries of the plane (see \cite[Thm 8.25 and Sect.\ 8.8]{LP} and \cite[Thm 2.1]{GL21a});
let $G\in\sG$ be such an embedding of $H$ and consider 
for definiteness the hyperbolic case (in the model of the Poincar\'e disk --- 
see \cite{CFKP} for an account of hyperbolic geometry).
We note that, by the isometricity property, the diameters of faces of
$G$ are bounded uniformly above (in the hyperbolic metric).
The current claim is the content of \cite[Lemma 4.3(a)]{GL-match}. 

\begin{comment}
It may also be proved as follows.

Since $G$ is quasi-transitive, there are only finitely many classes of face under the 
action of the automorphism group of $G$. If two faces lie in the same class, there
is an isometry of $\sH$ that maps one to the other. Thus the hyperbolic diameters of two faces 
in the same class are equal. We now change from the hyperbolic metric to the Euclidean metric on the unit 
disk $D$,
and apply the mapping $f: D \to \RR^2$ given by
 $f:(r,\theta)\mapsto (s,\theta)$ where $s=r/(1-r)$. On tracking the effects of this
two-stage mapping, we find that the faces of the resulting embedding of $H$ in $\RR^2$ 
have uniformly bounded (Euclidean) diameters. Therefore, this embedding
satisfies the condition of part (d) of the current theorem, and the claim follows by Remark \ref{rem:planar}. 
\end{comment}

(e) There is a partial order $\le$ on the set $\ST(G)$ of separating triangles of $G=(V,E)$ given 
by $T_1\le T_2$ if $T_1\subseteq\ol {T_2}$. A triangle $T\in\ST(G)$ is \emph{maximal}
if it is maximal with respect to $\le$, and $\sM$ denotes the set of maximal triangles. 
Since $G$ is quasi-transitive without accumulation points,
for $T'\in\ST(G)$, there exists $T\in\sM$ with $T'\le T$. 
Since each $T\in\sM$ is a subgraph of $\GD$, and $G$ and $\GD$ agree off the union of the maximal triangles,
$\GD$ has only bounded faces. 

We show next that $\GD$ is $2$-connected. Let $v$ be a vertex of $\GD$. Since $v\in V$
and $G$ is $2$-connected,
there exist infinite paths $\pi_1$, $\pi_2$ of $G$ that are vertex-disjoint except at their common initial
vertex $v$. Let $T\in\sM$. If $\pi_i$ intersects $T$, we find the first (\resp, last) intersection point
$x$ (\resp, $y$), and we remove from $\pi_i$ the section of the path lying strictly between $x$ and $y$.
This results in a subpath $\pi_i(T)$ that does not intersect $\ins(T)$. The process is iterated
as $T$ ranges over $\sM$, and the outcome is an infinite subpath $\nu_i$ of $\pi_i$ lying in $\GD$. 
Therefore, $\GD$
is $2$-connected.

The quasi-transitivity of $\GD$ follows from that of $G$, and the claim is proved.
\end{proof}

\subsection{The facial graph}\label{sec:face}

\begin{figure}
\centerline{\includegraphics[width=0.6\textwidth]{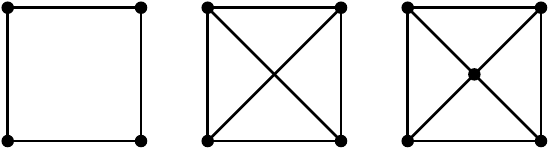}}
\caption{A square of the square lattice, its matching graph, and with its facial site added.}\label{fig:HK}
\end{figure}

Let $G\in\sG=(V,E)$, and let $\sQ$ be the set of all non-triangular faces of $G$. We shall work with the 
graph $\Gd=(\Vd,\Ed)$ obtained from $G$ by adding a new vertex within each face $F\in\sQ$,
and adding an edge from every vertex in the boundary $\pd F$ to this central vertex.
These new vertices are
called \emph{facial sites}, and the graph $\Gd$ is called the 
\emph{facial graph} of $G$. The facial site in the face $F$ is denoted $\phi(F)$. See 
\cite{GL-match},  \cite[Sect.\ 2.3]{K82}, \cite[Sect.\ 8.8]{LP}, and also Figure \ref{fig:HK}. 
If $\langle v,w\rangle$ is a diagonal of 
the matching graph $\Gm$, it
lies in some face $F$ of $G$ with four or more edges, 
and we write $\phi(v,w)=\phi_F(v,w) =\phi(F)$ for the corresponding facial site.

Of importance in this work
 is the graph $\GDd=(\VDd,\EDd)$, defined as the graph obtained by emptying the separating triangles of the 
facial graph $\Gd$. We note that $\GDd=(\Gd)_\De$ but generally
$\GDd \not= \what{(\GD)}$. The reason for this distinction lies in part (c) of the following lemma.
We recall the set $\NST(H)$ of \dinst s of a graph $H$. By \eqref{eq:noint} applied to
$\Gd$, we have that 
\begin{equation}\label{eq:noint2}
\NST(\Gd)=\NST(\GDd).
\end{equation}

\begin{lemma}\label{lem:4}
Let $G=(V,E)\in\sG$.
\begin{letlist}

\item Let $\nu\in\NST(\Gm)$, and let $F$ be a face of $G$. 
If $\nu\cap \pd F \ne\es$, then the intersection is 
exactly one of the following:  (i) a single vertex of $G$, (ii) a single edge of $G$, 
(iii) a single diagonal of $\Gm$. Moreover, the graph $\nu$ is plane.

\item 
For $\nu\in\NST(\Gm)$, let the path $\nuf=\sigma(\nu)$ on $\Gd$ be 
obtained from $\nu$ by replacing
any diagonal $\de(v,w)$ in a face $F$ by the pair $\langle v,\phi(F)\rangle,
\langle \phi(F),w\rangle$ of edges. The  function $\sigma$ maps $\NST(\Gm)$ into $\NST(\Gd)$
and is an injection.
The set $\NST(\Gd)$ may be expressed as the disjoint union
\begin{equation}\label{eq:equal}
\NST(\Gd) = \sigma(\NST(\Gm))\cup \NST_2(\Gd),
\end{equation}
where $\NST_2(\Gd)$ is the subset of $\NST(\Gd)$ containing all $\nuf$ for which, for some
face $F$ of $G$, we have (i) $\phi(F)\notin \nuf$, and (ii) 
the intersection $\nuf\cap \pd F$ contains a pair of non-adjacent vertices.

\item Let $\ST(H)$ denote the set of separating triangles of a plane graph $H$.
We have that $\ST(G)\subseteq \ST(\Gd)$, and moreover
\begin{equation}\label{eq:equal2}
\ST(\Gd) = \ST(G)\cup \ST_2(\Gd),
\end{equation}
where $\ST_2(\Gd)$ is the set of all non-facial $3$-cycles of $\Gd$ comprising two
edges of the form $\langle u,\phi(F)\rangle$, $\langle v,\phi(F)\rangle$ for some face $F$ of $G$ and
some $u,v\in \pd F$ with $d_{\pd F}(u,v)\ge 2$, together with an edge $\langle u,v\rangle$ of $G$. 

\item
Let $\nuf$ be a finite \nst\ path of $\Gd$. 
There exists a subsequence of $\nuf$ with the same
endvertices that forms a \nst\ path $\nu$ of $\Gm$.

\end{letlist}
\end{lemma}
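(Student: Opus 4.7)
\emph{Approach.} The plan is, given a finite \nst\ path $\nuf$ of $\Gd$ (whose endvertices must lie in $V$, since $\Gm$ contains no facial sites), to strip the facial sites from $\nuf$ to obtain a walk $\nu'$ in $\Gm$ with the same endvertices, and then to apply the oxbow-removal procedure of Lemma \ref{lem:cred} inside $\Gm$ to reduce $\nu'$ to a \nst\ subsequence $\nu$.

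\emph{Stripping facial sites.} Write $\nuf=(v_0,v_1,\dots,v_n)$ with $v_0,v_n\in V$, and let $\nu'=(v_{i_0},\dots,v_{i_m})$ be the subsequence obtained by deleting every $v_i$ that is a facial site. The neighbours in $\Gd$ of any facial site $\phi(F)$ all lie on $\pd F\subseteq V$, so two facial sites are never consecutive in $\nuf$; hence each consecutive pair of $\nu'$ was either already consecutive in $\nuf$, in which case the adjacency is inherited from $G\subseteq\Gm$, or was separated by exactly one facial site $\phi(F)$. In the latter case $v_{i_k},v_{i_{k+1}}\in\pd F$ are distinct, and the non-self-touching property of $\nuf$ in $\Gd$ prohibits $v_{i_k}\sim v_{i_{k+1}}$ (their index-distance in $\nuf$ is $2$), so in particular they are non-adjacent in $G$; hence $F$ is non-triangular and they are joined in $\Gm$ by the diagonal $\de(v_{i_k},v_{i_{k+1}})$. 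Therefore $\nu'$ is a walk in $\Gm$; as a subsequence of the distinct vertices of the path $\nuf$, it has pairwise distinct vertices, and it inherits the endvertices of $\nuf$.

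\emph{Oxbow removal and the main subtlety.} The walk $\nu'$ may still fail to be \nst\ in $\Gm$, since two of its vertices that were non-adjacent in $\Gd$ can be joined by a diagonal in $\Gm$. I would therefore apply the oxbow-removal procedure of Lemma \ref{lem:cred} within $\Gm$: iteratively excise the earliest oxbow of the current walk until none remains, terminating after finitely many steps in a \nst\ subsequence $\nu$ of $\nu'$ (hence of $\nuf$) with endvertices preserved. The \emph{one point requiring comment}, and the main potential obstacle, is that Lemma \ref{lem:cred} is stated for plane graphs whereas $\Gm$ is generally non-planar (as noted in Section \ref{sec:not}). However, the oxbow-removal argument manipulates only vertex sequences and the adjacency relation, and extends verbatim to any simple or multigraph, so it is legitimate to apply it in $\Gm$.
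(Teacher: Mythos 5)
Your proposal addresses only part (d) of the lemma. Parts (a), (b) and (c) are not touched at all, and they are not vacuous: (a) requires showing that a path in $\NST(\Gm)$ meets a face boundary $\pd F$ in at most a single vertex, single edge of $G$, or single diagonal (because three vertices of $\ol F$ on the path, or two non-consecutive ones, would violate the \nst\ property in $\Gm$), and that consequently $\nu$ is plane; (b) needs (a) to see that $\sigma$ is well defined into $\NST(\Gd)$ and injective, together with the decomposition \eqref{eq:equal} obtained by examining a \dinst\ of $\Gd$ that is not of the form $\sigma(\nu)$; and (c) requires classifying the separating triangles of $\Gd$ that are not already in $\ST(G)$, namely those using two edges incident to a facial site $\phi(F)$ plus an edge $\langle u,v\rangle$ of $G$ with $d_{\pd F}(u,v)\ge 2$, giving \eqref{eq:equal2}. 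As it stands, the proposal therefore has a genuine gap: three of the four assertions are unproved.

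Your argument for part (d) itself is sound, and it takes a slightly different route from the paper. The paper's proof works face by face: for each face $F$ with $\phi(F)\notin\nuf$ whose boundary is hit in non-adjacent vertices, it deletes the subpath between the first and last visits to $\pd F$ and inserts the corresponding edge or diagonal ($\phi$-removal), iterating over faces; you instead strip all facial sites in one pass (correctly noting that no two facial sites are consecutive, and that the two boundary vertices flanking a stripped $\phi(F)$ are non-adjacent in $G$ by the \nst\ property at index-distance $2$, so the diagonal exists in $\Gm$) and then finish with oxbow removal inside $\Gm$. Your observation that the finite-path case of Lemma \ref{lem:cred} uses only the adjacency relation, so planarity (and strict simplicity, despite possible parallel diagonals of $\Gm$) is irrelevant there, is correct; likewise the implicit reading that the endvertices of $\nuf$ lie in $V$ is the intended one, since otherwise the conclusion could not hold. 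So part (d) is fine, but you must supply proofs of (a)--(c) for the lemma as stated.
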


We note some further notation. Firstly, the process used in the proof of (d), to replace $\nuf$ by $\nu$, is
termed \emph{$\phi$-removal}. Secondly, since we shall be interested in the mapping $\sigma$,
we introduce another binary relation on the vertex-set $\Vd$ of $\Gd$, namely:
\begin{equation}\label{eq:relation}
\text{for $x,y\in\Vd$, we write $x\hsim y$ if $G$ has some facial cycle $C$ such that $x,y\in\ol C$.}
\end{equation}
The negation of $\hsim$ is written $\nhsim$. 
For $x,y\in V$, we have $ x\hsim y$ if and only if $x$, $y$ are neighbours in $\Gm$.

\begin{proof}
(a) This was proved at \cite[Lemma 4.4]{GL-match}. 
Such $\nu$ cannot contain three or more vertices of any given face since that would contradict the
\nst\ property. If $\nu$ contains two such vertices, it must contain the corresponding edge.
If $\nu$ were non-planar, it would contain two or more diagonals of some face.  

(b) That $\sigma$ is an injection into $\NST(\Gd)$ holds by (a) and the 
obvious invertibility of $\sigma$.
Equation \eqref{eq:equal} holds by a consideration of \dinst s $\nu\in\NST(\GDd)\sm\sigma(\NST(\Gm))$.

(c) The inclusion holds since $G$ is a subgraph of $\Gd$.
Let $T\in\ST(\Gd)\sm\ST(G)$. Since $T \notin \ST(G)$, it contains some edge
of the form $\langle u, \phi(F)\rangle$. Since it is a separating $3$-cycle, it contains a further edge of the form 
$\langle v,\phi(F)\rangle$ where $d_{\pd F}(u,v) \ge 2$. The claim of \eqref{eq:equal2} follows.

(d) Let $\nuf$ be as given, and view it as a directed path. 
If $\nuf\in\sigma(\NST(\Gm))$, we simply replace the facial
sites in $\nuf$ by the corresponding diagonals. Assume that $\nuf\in\NST_2(\Gd)$, and let $F$
be a face of $G$ such that $\phi(F)\notin \nuf$ and $\nuf\cap \pd F$ contains two (or more) non-adjacent vertices.
Let $x$ (\resp, $y$) be the first (\resp, last) vertex of $\nuf$
in $\pd F$, and note that $x\hsim y$. We delete from $\nuf$ the subpath lying between $x$ and $y$ 
while retaining these two vertices and adding the 
corresponding edge (this edge lies in $E$ if $x \sim y$ in $G$,
and is a diagonal otherwise).  This process is iterated for each such face, and
the ensuing path is as required.
\end{proof}

\begin{figure}
\centerline{\includegraphics[width=0.3\textwidth]{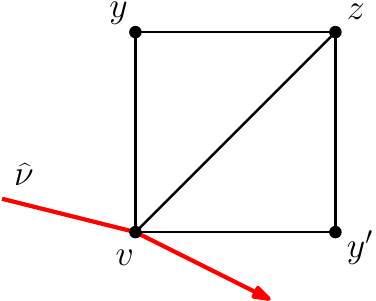}}
\caption{The $4$-cycle in Proposition \ref{lem:main}(b) comprises two triangles with a common edge}
\label{fig:common}
\end{figure}

\section{The main proposition}\label{sec:princ}

We present here the main proposition, which will be used twice in the proof of Theorem \ref{thm:0}. 
The proof of the proposition is deferred to Sections \ref{sec:step2} and \ref{sec:step3}.

\begin{proposition}\label{lem:main}
Let $G\in\sG$  satisfy $\GD\in\sG$.

\begin{letlist}
\item Let $F$ be a face of $\GD$ with four or more edges, and 
let  $\nu\in\NST(\Gm)$ be a path that  includes some vertex $v\in\pd F$. 
There exists $\ol\nu\in\NST(\Gm)$ that includes some diagonal of $F$.

\item
Let $Q$ be a $4$-cycle of $\GDd$ comprising the union of two triangles with 
a common edge $\langle v,z\rangle$ (as in Figure \ref{fig:common}), and 
let  $\nuf\in\sigma(\NST(\Gm))$ be a path that includes no facial site but includes $v$. 
Either there exists $\nuf_1\in\sigma(\NST(\Gm))$ that includes some facial site, or
there exists $\nuf_1\in\sigma(\NST(\Gm))$ that includes 
no facial site but includes 
$z$.
\end{letlist}
Furthermore, the pair $\nu$, $\ol\nu$ (\resp, $\nuf$, $\nuf_1$) differ on only finitely many edges.
\end{proposition}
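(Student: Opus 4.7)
The plan is to reduce the problem to the facial graph $\Gd$ via the correspondence $\sigma: \NST(\Gm) \to \NST(\Gd)$ of Lemma \ref{lem:4}(b), and to exploit the fact that $\Gd$ is by construction a triangulation. Under the hypothesis $\GD \in \sG$, the graph $\GDd$ is a $\tri$-empty triangulation and hence $4$-connected by the argument in the proof of Theorem \ref{thm:A}(b). I would first prove part (b), which is a \emph{local push} that trades the vertex $v$ for $z$ across the common edge of two triangles, and then deduce part (a) by iterating (b) along the boundary of $F$ using a nested sequence of cycles provided by the triangulation structure of $\GDd$.

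For part (b), write $Q=(v,a,z,b,v)$ with diagonal $\langle v,z\rangle$, and let $u_-,u_+$ denote the two $\Gd$-neighbours of $v$ along $\nuf$. The plan is to case-split on the position of $\{u_-,u_+\}$ relative to $\{a,b\}$. In the cleanest case $\{u_-,u_+\}=\{a,b\}$, one replaces the segment $u_-\, v\, u_+$ by $u_-\, z\, u_+$ and then applies oxbow removal (Lemma \ref{lem:cred}) to restore the \nst\ property. The remaining cases, in which exactly one or neither of $u_\pm$ lies in $\{a,b\}$, are handled by a short detour through $a$, $b$, or $z$ within $Q$, followed again by oxbow removal. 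In every case the resulting path $\nuf_1$ is a \nst\ path of $\Gd$ that either contains a facial site (the first alternative of (b)) or passes through $z$ without containing any facial site (the second alternative). To ensure $\nuf_1\in\sigma(\NST(\Gm))$ rather than $\NST_2(\Gd)$, I would invoke Lemma \ref{lem:4}(b): if $\nuf_1\in\NST_2(\Gd)$, the offending face contains two non-adjacent vertices of $\nuf_1$, and inserting the corresponding facial site yields a new \nst\ path in $\sigma(\NST(\Gm))$ that contains a facial site and hence falls into the first alternative.

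For part (a), I would use the $4$-connectedness of $\GDd$ to construct in Section \ref{sec:step0} an infinite nested sequence of cycles $(C_n)$ in $\GDd$ with $F\subset\ins(C_{n+1})\subset\ins(C_n)$. Set $\nuf=\sigma(\nu)\in\sigma(\NST(\Gm))$, so that $v\in\pd F\cap\nuf$. If $\phi(F)\in\nuf$ then $\nu$ already uses a diagonal of $F$ and the statement is trivial; otherwise I would apply (b) iteratively, using the $4$-cycles of $\GDd$ formed by consecutive vertices $x,y\in\pd F$ together with $\phi(F)$ and the triangle-apex of $\langle x,y\rangle$ on the other side. Each application either introduces $\phi(F)$ into the path (giving a diagonal after $\sigma^{-1}$) or slides the path along $\pd F$ to the next boundary vertex. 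Since $\pd F$ is a finite cycle, after finitely many steps the path either acquires $\phi(F)$ or passes through two non-adjacent vertices of $\pd F$; Lemma \ref{lem:4}(b) then yields the required diagonal of $F$ and, via $\sigma^{-1}$, the desired $\ol\nu\in\NST(\Gm)$.

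The main obstacle is controlling the global effect of each local modification in (b): replacing edges near $v$ can in principle create unwanted adjacencies elsewhere along $\nuf$, violating the \nst\ property. Here the tools are oxbow removal (Lemma \ref{lem:cred}) to excise any spurious subpath created by new adjacencies, together with the planarity of $\Gd$ (Lemma \ref{lem:4}(a)) which confines the possible obstructions to a bounded neighbourhood of $Q$. The finiteness claim that $\nu$ and $\ol\nu$ (respectively $\nuf$ and $\nuf_1$) differ on only finitely many edges follows because each application of (b) changes only edges inside a bounded neighbourhood of $Q$, and the iteration in (a) terminates after at most $|\pd F|<\oo$ steps; thus all modifications remain inside $\ins(C_n)$ for some finite $n$.
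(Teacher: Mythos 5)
Your proposal diverges from the paper's structure in a way that introduces genuine gaps. The paper does not derive part (a) by iterating part (b); it proves (a) directly via a detailed case analysis on the facial graph $\GDd$ (Section~\ref{sec:step2}, Cases I--II with sub-cases A--F), and the iteration of (b) happens one level up, in Lemmas~\ref{lem:102}--\ref{lem:103}, to bring a path to $\pd F$ via a \emph{nested, strictly decreasing} sequence of cycles $A_0, A_1, A_2,\dots$ around $F$ supplied by Lemma~\ref{lem:101}. Your substitute iteration, sliding along $\pd F$ from one boundary vertex to the next, has no monotone quantity and hence no termination argument: after each application of (b) you obtain a new \dinst\ containing the next vertex of $\pd F$, but there is nothing preventing the procedure from cycling through the finitely many vertices of $\pd F$ indefinitely. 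Moreover, even when your iteration does land in the ``facial site'' alternative of (b), that facial site may be $\phi(J)$ for some face $J\ne F$, and then $\sigma^{-1}(\nuf_1)$ traverses a diagonal of $J$ rather than of $F$; the conclusion of (a) specifically requires a diagonal of $F$.

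The second gap is in your treatment of (b) itself. The move ``replace $u_-\,v\,u_+$ by $u_-\,z\,u_+$ and apply oxbow removal'' is not sound on its own: the swap may create an adjacency between $z$ and a vertex far back along $\nuf(u-)$ or far ahead along $\nuf(w+)$, and Lemma~\ref{lem:cred} (oxbow removal) may then delete $z$, or delete a subpath in a way that destroys the \nt\ relation between the two half-rays. Indeed, the bulk of the paper's work in Sections~\ref{sec:step2}--\ref{sec:step3} is precisely the control of such long-range adjacencies: the vertices surrounding $v$ are labelled $y_1,\dots,y_r$ and $z_{i,j}$, the positions of the ``rightmost $U$'' and ``leftmost $W$'' labels among the $z_i$ are identified, and Lemma~\ref{lem:notri} establishes winding-number and non-touching properties guaranteeing that the reconnection is \nst\ \emph{and} still lies in $\sigma(\NST(\Gm))$. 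Your proposal substitutes for all of this the assertion that planarity confines obstructions to a bounded neighbourhood of $Q$, which is not true and is not what Lemma~\ref{lem:4}(a) says (that lemma bounds the intersection of $\nu$ with a single face and asserts that $\nu$ is a plane graph). The cases where $\{u_-,u_+\}\ne\{a,b\}$ -- glossed as ``a short detour through $a$, $b$, or $z$'' -- are precisely the ones where the careful $z_{i,j}$-bookkeeping of Section~\ref{sec:step2} (with its branching into Cases~A--F, including the delicate Case~D involving a common facial site $\phi(J)$ between the two half-rays) is needed.
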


\section{Cycle structure of a plane graph}\label{sec:step0}

First, we explain how to define the so-called \lq exterior cycle' of a cycle of a plane graph.
This is followed by a description of a system of nested cycles surrounding a given cycle 
of a triangulation.

\subsection{Exterior cycles}\label{ssec:effective}

Let $G=(V,E)\in\sG$, and recall the set $\sC=\sC(G)$ of cycles of $G$.
For $A\in\sC(G)$, we shall construct a new cycle $B=\Ext(A)$ called the \emph{exterior
cycle} of $A$. An intuitive explanation of this is as follows (see Figure \ref{fig:effective}).
As we walk around the cycle $A$,
we may encounter \lq shortcuts' using no edge of $\ins(A)$ --- these are edges not in $A$
that join two vertices of $A$. When allowing the walker to take such shortcuts, the exterior cycle is
the walker's route that traverses fewest edges (including shortcuts). 

Let $A\in\sC(G)$, let $X$ be the set of edges of $G$ of the form $f=\langle a,b\rangle$ with $a,b\in A$;
in particular, the edges of $A$ lie in $X$. 
Let $Y$ be the subset of $X$ containing edges that neither lie in nor intersect
$\ins(A)$.  Recalling that $G$ is embedded in the plane, an edge $f=\langle a,b\rangle\in Y$ may appear either clockwise or anticlockwise around $A$ (in that, when considered as a directed edge from $a$ to $b$,
it has two distinct possible placements in the embedding).
Consider the subgraph of $G$ with edge-set $X$ and its incident vertices, denoted as $X$ also.
Then $X$ has an outer cycle formed of edges in $Y$, and we write $B=\Ext(A)$ for this cycle. 
Note that the number of edges in $B$ is no greater than the number in $A$.
Furthermore, if $A$ is a $3$-cycle, then $A=\Ext(A)$.
The construction is illustrated in Figure \ref{fig:effective} in the case when $A$ is facial in $G$. 

\begin{figure}
\centerline{\includegraphics[width=0.7\textwidth]{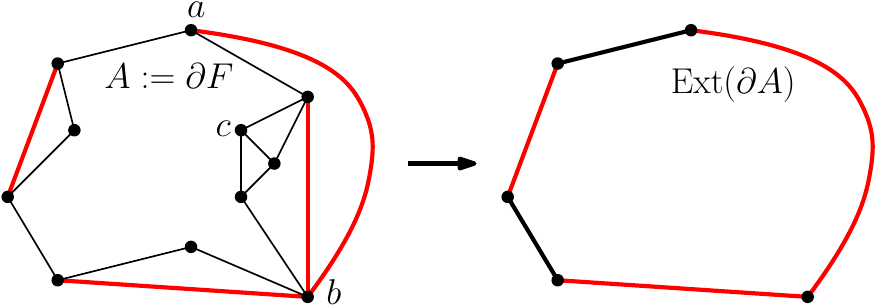}}
\caption{\emph{Left}: A face $F$ of $\GD$ surrounded by the (black) cycle $A:=\pd F$, and 
with further edges \rev{in $Y$} coloured red.
\emph{Right}: The exterior cycle $\Ext(\pd F)$.}\label{fig:effective}
\end{figure}

\begin{remark}\label{rem:facial}
Let $G\in\sG$ satisfy $\GD\in\sG$, and let $A$ be a cycle of $\GD$ (and hence of $G$ also).
We may use either $G$ or $\GD$ in constructing $\Ext(A)$, and the outcome is the same.
If, in addition,  $A$ is a facial cycle of $G$ (that is, $A=\pd F$ for some face $F$ of $G$), 
then $B:=\Ext(A)$ is a cycle of $\GDd$
whose interior contains only one facial site and its incident edges.
We may denote this facial site $\phi(F)$.
\end{remark}

\begin{lemma}\label{lem:201}
Let $G\in\sG$ satisfy $\GD\in\sG$.
Let $F$ be a face of $\GD$ (and hence of $G$ also) with size $4$ or more.
\begin{letlist}
\item
 The exterior cycle $\Ext(\pd F)$
is a cycle of $\GD$ with length $4$ or more.

\item
Let $G(F)$ (\resp, $\GD(F)$) be obtained from $G$ (\resp, $\GD$) by removing 
all vertices and incident edges within $\ins(\Ext(\pd F))$. Then
$\NST(\GD) = \NST(\GD(F))$.

\item
Let $\Gd(F)_\De$ be obtained from the facial graph $\Gd(F)$ of $G(F)$ by emptying
its separating triangles (that is, $\Gd(F)_\De := (\what{G(F)})_\De$).
Then $\Gd(F)_\De=\GDd$. 
In particular, $\NST(\GDd) = \NST(\Gd(F)_\De)$.

\end{letlist}
\end{lemma}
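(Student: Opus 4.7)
\emph{Part (a).} The vertices of $\Ext(\pd F)$ all lie on $\pd F\subseteq V(\GD)$, and each of its edges is an edge of $G$ joining two such vertices. No such edge can lie strictly inside a $3$-cycle of $G$: a $3$-cycle has only three vertices, so any edge whose endpoints are both on the cycle is itself an edge of the cycle. Hence every edge of $\Ext(\pd F)$ is retained in $\GD$, and $\Ext(\pd F)$ is a cycle of $\GD$. If its length were $3$, then since $|\pd F|\ge 4$ at least one vertex $v$ of $\pd F$ would be skipped by $\Ext(\pd F)$ and lie strictly inside it; as $v\in V(\GD)$, the $3$-cycle $\Ext(\pd F)$ would then be a separating triangle of $\GD$, contradicting the $\tri$-emptiness of $\GD$.

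\emph{Part (b).} The inclusion $\NST(\GD(F))\subseteq\NST(\GD)$ is immediate: forming $\GD(F)$ removes only vertices together with their incident edges, so $\GD$-adjacency and $\GD(F)$-adjacency agree on the retained vertex set, and the \nst property transfers. For the reverse inclusion, I show that every $\nu\in\NST(\GD)$ avoids $\ins(\Ext(\pd F))$. Assuming instead that $w\in\nu\cap\ins(\Ext(\pd F))$, isolate a maximal subpath of $\nu$ whose interior vertices all lie in $\ins(\Ext(\pd F))$. Planarity together with the absence of $V(\GD)$-vertices in $F$ forces its two endpoints $a$ and $b$ onto $\Ext(\pd F)\subseteq\pd F$, with $a\ne b$. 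The plan is then to show that $a$ and $b$ are joined by a common edge of $\Ext(\pd F)\sm\pd F$; then $a\sim b$ in $\GD$, and since $w$ sits strictly between $a$ and $b$ on $\nu$ this contradicts the \nst property of $\nu$.

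The main obstacle is making this last claim precise. Decompose $\ins(\Ext(\pd F))\sm F$ into \emph{pockets}, each being the open planar region bounded by one edge $\langle u,v\rangle$ of $\Ext(\pd F)\sm\pd F$ and the corresponding non-$\Ext(\pd F)$ arc of $\pd F$ from $u$ to $v$. Both $F$ (which contains no $V(\GD)$-vertex) and $\Ext(\pd F)$ (which by maximality cannot be met by the interior of the subpath) act as barriers that confine the excursion to a single pocket. A planarity argument applied to the $G$-edges between $\Ext(\pd F)$ and the pocket's interior then restricts the possible entry/exit vertices on $\Ext(\pd F)$ to the two chord-endpoints $u$ and $v$; since the subpath cannot revisit vertices, entry and exit must be distinct, forcing $\{a,b\}=\{u,v\}$. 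Care is needed to handle nested chord structure inside a pocket, but the same barrier and planarity principles iterate.

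\emph{Part (c).} Outside $\Ext(\pd F)$, $G$ and $G(F)$ coincide, so $\Gd$ and $\Gd(F)$ agree in their vertices, edges, facial sites for non-triangular faces, and separating triangles; therefore $\GDd$ and $\Gd(F)_\De$ agree there. Inside $\Ext(\pd F)$, the graph $\Gd(F)$ contains only the facial site $\phi(F')$ for the new face $F'$ bounded by $\Ext(\pd F)$ (which has size at least $4$ by part (a)) together with its edges to each vertex of $\Ext(\pd F)$, and this vertex lies on no separating-triangle interior and so is retained. In $\Gd$, each edge $\langle u,v\rangle$ of $\Ext(\pd F)\sm\pd F$ contributes a separating triangle in $\ST_2(\Gd)$ (by Lemma~\ref{lem:4}(c)) with vertex-set $\{u,v,\phi(F)\}$, whose open interior covers the entire corresponding pocket of $G$ together with the facial sites of all faces of $G$ inside it. Emptying these triangles therefore eliminates every vertex of $V(\Gd)$ strictly inside $\Ext(\pd F)$ except $\phi(F)$, which lies on the boundary of each such triangle; the surviving $\Gd$-edges from $\phi(F)$ are precisely those to vertices of $\Ext(\pd F)$. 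Identifying $\phi(F)$ with $\phi(F')$ (both are facial sites in the face $F\subseteq F'$, which we may take to be the same point of $\RR^2$), we obtain $\Gd(F)_\De=\GDd$, and the $\NST$-equality follows.
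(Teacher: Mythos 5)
Your proof is correct and follows essentially the same approach as the paper, which at the key step of part (b) simply asserts that a $\GD$-path entering $\ins(\Ext(\pd F))$ exits at a neighbour of its entry vertex, citing the figure. Your pocket decomposition makes this assertion explicit; and the worry you flag about nested chords inside a pocket is in fact harmless, since the only vertices of $\Ext(\pd F)$ lying on a pocket's closure are the two endpoints of its bounding chord regardless of any interior chord structure, so the entry and exit vertices are forced to be those chord endpoints exactly as you intend.
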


\begin{remark}\label{rem:10}
Let $G\in\sG$ satisfy $\GD\in\sG$.
By Lemma \ref{lem:201}(a), the exterior cycle of a $4$-cycle $Q$ of $\GD$ is $Q$ itself.
\end{remark}

\begin{proof}[Proof of Lemma \ref{lem:201}]
(a) The length $l$ of the cycle $\Ext(\pd F)$ satisfies $l\ge 1$. Evidently, $l\ge 3$ since
$G$ is simple. If $l=3$, then $\Ext(\pd F)$ is a $3$-cycle of $\GD$ whose interior intersects $\pd F$, 
in contradiction of the definition of $\GD$.

(b) Let $\pi \in \NST(\GD)\sm\NST(\GD(F))$, so that $\pi$ contains some vertex, $c$ say, in $\ins(\Ext(\pd F))$.
Thinking of $\pi$ as a directed path, let $a$ be the last vertex of $\pi\cap\Ext(\pd F)$ prior to $c$,
and $b$ the first vertex of $\pi\cap\Ext(\pd F)$ after $c$ (see Figure \ref{fig:effective}). 
By the definition of exterior cycle, and the fact that $\pd F$ is a facial cycle, it is the case that
$a$ and $b$ are neighbours in $\Ext(\pd F)$. Therefore $\pi\notin \NST(\GD)$, a contradiction,
whence $\NST(\GD) \subseteq  \NST(\GD(F))$.
Conversely, any $\pi\in \NST(\GD(F)) \sm \NST(\GD)$ must contain non-consecutive vertices 
$a$, $b$ lying in $\Ext(F)$ such that $\ins(\GD(F))$ contains a path joining $a$ and $b$.
As in the above, this requires that $a$ and $b$ are adjacent in $\Ext(\pd F)$, a contradiction. 

(c) The graphs $G$ and $G(F)$ differ only on the interior of $\Ext(F)$.
Therefore, the same holds for their facial graphs $\Gd$ and $\Gd(F)$.
After emptying separating triangles, each of the two interiors of $F$ in the two resulting graphs
is a wheel with a hub at the facial site $\phi(F)$ (recall Remark \ref{rem:facial})
and spokes to the vertices of $\Ext(F)$. 
It follows that $\Gd(F)_\De = \GDd$ as claimed.
\end{proof}

\subsection{Cycle structure of a triangulation}\label{ssec:step1}

Let $H\in\sG$ be a triangulation.
For $A\in\sC(H)$, we write $N_A$ for the set of neighbours of members of $A$
lying in the unbounded component of $\RR^2\sm A$. Thus, $A \cap N_A=\es$ and (since $G$ is a triangulation)
every $a\in A$ has some neighbour $b\in N_A$.
We think of the edges between $A$ and $N_A$ 
as ordered cyclically as one traverses $A$ clockwise.

The following lemma and more was proved in \cite[Sect.\ 3]{Jung}, from which we extract the element of 
current interest.

\begin{lemma}\label{lem:101}

Let $H\in\sG$ be a triangulation, and let $A\in\sC(H)$. The set $N_A$ contains a cycle $B\in\sC(H)$ 
satisfying $A\subseteq \ins(B)$ and $N_A\subseteq \ol B$.
\end{lemma}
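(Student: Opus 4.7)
My plan is to construct $B$ as (a cycle extracted from) the outer topological boundary of the closed region $R\subseteq\RR^2$ obtained by taking the union of $\ol A$ with all triangular faces of $H$ that share at least one vertex with $A$ and lie in the unbounded component of $\RR^2\sm A$. I would first expose the local structure around $A=(a_1,\ldots,a_n,a_1)$: since $H$ is a triangulation, the exterior face of $H$ at each edge $\langle a_i,a_{i+1}\rangle$ is a triangle whose third vertex $c_i$ is a common neighbour of $a_i,a_{i+1}$ lying in $N_A\cup A$. At each vertex $a_i$ the faces of $H$ incident to $a_i$ lying in the exterior of $A$ form a fan in the rotation at $a_i$, running from the exterior face at $\langle a_{i-1},a_i\rangle$ to that at $\langle a_i,a_{i+1}\rangle$, and this fan yields a sequence $v_{i,1}=c_{i-1},v_{i,2},\ldots,v_{i,k_i}=c_i$ of exterior neighbours of $a_i$ with $v_{i,j}\sim v_{i,j+1}$, because consecutive fan-faces are triangles at $a_i$ sharing an edge between their opposite vertices.

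Concatenating these fans cyclically (identifying $v_{i,k_i}=v_{i+1,1}=c_i$) produces a closed walk $W$ in $H$ of consecutively adjacent vertices in $N_A\cup A$, that encircles $A$ in the plane. In the chord-free case, where every $v_{i,j}\in N_A$, each vertex of $N_A$ appears in $W$ as an exterior neighbour of some $a_i$. Applying oxbow removal as in Lemma \ref{lem:cred}, while at each step retaining the loop that encloses $A$, extracts a cycle $B\subseteq N_A$ with $A\subseteq\ins(B)$; any vertex of $N_A$ deleted with an oxbow is absorbed into $\ins(B)$, so $N_A\subseteq\ol B$. Put geometrically, every face of $H$ incident to an $A$-vertex lies in $R$, so $A$ is in the topological interior of $R$, and the outer boundary of $R$ is the desired cycle, with $N_A\subseteq R\subseteq\ol B$.

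The main obstacle is the chord case, where some $v_{i,j}$ or $c_i$ lies in $A$, that is, $A$ has a chord in its exterior. Such a chord may force an intermediate $A$-vertex to have no neighbour at all in $N_A$ (so the naive walk $W$ would traverse a vertex of $A$), and can create pinch points on the boundary of $R$ where $W$ revisits the same $N_A$-vertex. I would handle both phenomena by induction on the number of exterior chords: an exterior chord $\langle a_i,a_j\rangle$ together with the $A$-arc on one of its sides bounds a strictly shorter cycle $A'$ to which the lemma applies, and one splices the resulting cycle in $N_{A'}$ into $W$ in place of the chord-passage through $A$, after verifying that the splice leaves $A$ enclosed and every remaining vertex of $N_A$ inside $\ol B$. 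Keeping track simultaneously of pinches, chord-bounded subregions, and the outermost encircling loop is the delicate point, and is essentially the content of the argument of \cite[Sect.\ 3]{Jung}.
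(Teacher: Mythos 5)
Your construction is correct but differs in presentation from the paper's own proof. The paper considers the finite subgraph $K$ of $H$ induced on $\ol A\cup N_A$, observes that $K$ is a connected inner triangulation, and then asserts (with a one-line justification and a citation of Jung) that the boundary of $K$ --- the set of its vertices adjacent to some vertex outside $K$ --- is a cycle $B$ with the stated properties. You instead build the boundary walk $W$ explicitly from the fan of exterior faces at each $a_i$, extract a cycle by a variant of oxbow removal that retains the loop encircling $A$, and propose to handle exterior chords by induction on their number. Both proofs ultimately defer the delicate case analysis to \cite[Sect.\ 3]{Jung}; yours is more constructive and more candid about where the difficulties actually lie (chords, pinch points), whereas the paper's is shorter because it leans directly on the structural fact about inner triangulations. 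One step in your argument deserves an explicit sentence rather than an assertion: that any $N_A$-vertex deleted in an oxbow-removal step is absorbed into $\ins(B)$ is not automatic. If $z$ is the repeated vertex, the deleted loop $W_2$ shares only $z$ with the retained loop $W_1$ and hence lies entirely on one side of $W_1$; since every vertex of $W_2\subseteq N_A$ has an edge to $A\subseteq\ins(W_1)$, and such an edge cannot cross $W_1$ except at $z$, one concludes $W_2\subseteq\ol{W_1}$, which is exactly what is needed.
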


\begin{proof}
Consider the finite graph $J$ induced by the vertices of $H$ in $\ol A \cup N_A$.
By construction, $J$ is connected, and is
an inner triangulation (in that it is finite and all its faces except possibly its exterior face are triangles).
The exterior
face is the unique unbounded face, and it has a boundary $B$ comprising edges of $J\sm \ol A$. 
The set $B$ forms a cycle since, if not, 
$J$ contains some $c$ such that $c\notin \ol A$ and $d_H(A,c)\ge 2$.
This would be a contradiction.
\end{proof}

There follow two lemmas that  will be used in the proof of Theorem \ref{thm:0}
at the end of this section. Recall from Lemma \ref{lem:4} the map $\sigma:\NST(\Gm) \to \NST(\Gd)$.

\begin{lemma}\label{lem:102}

Let $G\in\sG$ satisfy $\GD\in\sG$, and let $A$ be a cycle 
of the triangulation $\GDd$. Assume $\Gm$ has a \dinst\ $\nu$
such that $\nuf=\sigma(\nu)$ has the following properties: (i) $\nuf$ includes
no facial site, (ii) $\nuf\cap N_A\ne\es$, and (iii) $\nuf\cap A=\es$. 
There exists $\ol\nu\in\NST(\Gm)$ such that either (i) $\ol\nu$ traverses some diagonal, or (ii)
$\ol\nu$ traverses no diagonal but satisfies $\nuf\cap A\ne\es$.
Furthermore, $\nu$ and $\ol\nu$ differ on only finitely many edges.
\end{lemma}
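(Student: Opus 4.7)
The strategy is to apply Proposition \ref{lem:main}(b) at a vertex of $\nuf$ that touches $N_A$.

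By hypothesis (ii), pick a vertex $w\in\nuf\cap N_A$, and by the definition of $N_A$ pick a neighbour $a\in A$ of $w$; since $A\cap N_A=\es$ we have $w\ne a$. Because $\GDd$ is a triangulation, the edge $\langle w,a\rangle$ bounds exactly two triangular faces, whose third vertices $x$, $y$ are distinct (as $\GDd$ is simple). Hence the 4-cycle $Q:=(w,x,a,y)$ of $\GDd$, together with the chord $\langle w,a\rangle$, realises the bowtie configuration of Figure \ref{fig:common}. The hypotheses of Proposition \ref{lem:main}(b) are thus met with $v:=w$ and $z:=a$: we have $\nuf=\sigma(\nu)\in\sigma(\NST(\Gm))$, $\nuf$ includes no facial site by (i), and $\nuf$ contains $v=w$ by choice.

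Applying Proposition \ref{lem:main}(b) yields a path $\nuf_1\in\sigma(\NST(\Gm))$ that differs from $\nuf$ on only finitely many edges and satisfies one of: (A) $\nuf_1$ contains a facial site, or (B) $\nuf_1$ contains no facial site but includes $z=a$. By the injectivity of $\sigma$ from Lemma \ref{lem:4}(b), write $\nuf_1=\sigma(\ol\nu)$ with $\ol\nu\in\NST(\Gm)$. In case (A), the facial site on $\nuf_1$ arises via $\sigma$ from a diagonal of $\Gm$ traversed by $\ol\nu$, giving conclusion (i) of the lemma. In case (B), since $a\in A$ we obtain $a\in\sigma(\ol\nu)\cap A$, which is conclusion (ii) (reading the displayed intersection in the statement as $\sigma(\ol\nu)\cap A\ne\es$). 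The finite edge-difference between $\nu$ and $\ol\nu$ in $\Gm$ follows from that between $\nuf$ and $\nuf_1$ in $\Gd$, because $\sigma$ merely replaces each diagonal locally by a two-edge path through a facial site.

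The substantive content is entirely inside Proposition \ref{lem:main}(b); the only new input here is the identification of the correct bowtie local structure across the $A$-$N_A$ interface, which is immediate from the triangulation property of $\GDd$. I therefore expect no genuine obstacle in this lemma beyond what is already handled by Proposition \ref{lem:main}(b): the delicate part is producing the modified path that forgoes $v$ in favour of either a diagonal or $z$ while preserving the \nst\ property globally.
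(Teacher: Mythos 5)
Your proposal is correct and follows essentially the paper's own route: both arguments locate an edge of $\GDd$ joining a vertex of $\nuf$ to a vertex of $A$, take the quadrilateral formed by the two incident triangles, and conclude by Proposition \ref{lem:main}(b), with the two conclusions of that proposition translating directly into alternatives (i) and (ii) of the lemma. The only difference is that you apply this at an arbitrary vertex of $\nuf\cap N_A$ (which has a neighbour in $A$ by the definition of $N_A$), whereas the paper first passes to the cycle $B\subseteq N_A$ of Lemma \ref{lem:101} and the first intersection of $\nuf$ with $B$; this detour is not needed for the application of Proposition \ref{lem:main}(b), so your shortcut is harmless.
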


\begin{proof}[Proof of Lemma \ref{lem:102} using Proposition \ref{lem:main}(b)]
Let $\nu\in\NST(\Gm)$ be as given. 
Since $\nuf\cap N_A\ne\es$ by assumption, we have that $\nuf\cap B\ne\es$ also
(where $B$ is given in Lemma \ref{lem:101} with $H=\GDd$). Let $v$ ($\in V$)
be the first  point  in $\nuf$ (considered as a directed path)
that lies in $B$.

Since $B\subseteq N_A$, there exists an edge $e=\langle v,z\rangle$ of $\GDd$ 
with $z\in A$.
The edge $e$ lies in two $3$-cycles of $\GDd$,
and the union of these triangles forms a quadrilateral $Q$
with $v$ and $z$ as opposite vertices. See Figure \ref{fig:twosits}.
The claim follows by Proposition \ref{lem:main}(b).
\end{proof}

\begin{figure}
\centerline{\includegraphics[width=0.3\textwidth]{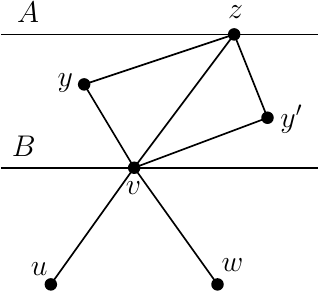}}
\caption{When $v\in \nuf\cap B$, there exists $z\in A$ such
that $v\sim z$ in $\GDd$.  The edge $\langle v,z\rangle$ lies in two triangles whose union 
forms the quadrilateral illustrated here. Each of the vertices $y$, $y'$ may lie in either $A$
or $B$ or neither.}
\label{fig:twosits}
\end{figure}

\begin{lemma}\label{lem:103}
Let $G\in\sG$ satisfy $\GD\in\sG$, and let $A$ be a cycle of $\GD$ (and hence of $G$ also) of size $4$ or more.
If $\Gm$ has some \dinst\ $\nu$, then either 
(i) there exists $\ol\nu\in\NST(\Gm)$ that traverses some diagonal, or 
(ii) there exists $\ol\nu\in\NST(\Gm)$ that traverses no diagonal but includes some vertex of $A$.
Furthermore, $\nu$ and $\ol\nu$ differ on only finitely many edges.
\end{lemma}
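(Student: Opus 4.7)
The plan is to push $\nuf = \sigma(\nu)$ radially inward to a vertex of $A$, applying Lemma \ref{lem:102} finitely often to a nested sequence of cycles surrounding $A$ in the triangulation $\GDd$.

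First I dispose of the trivial cases. If $\nu$ traverses a diagonal, set $\ol\nu := \nu$ to obtain~(i). Otherwise $\nuf$ includes no facial site by Lemma \ref{lem:4}(b), so $\nuf$ and $\nu$ share the same vertex set; if this set meets $A$ then $\ol\nu := \nu$ satisfies~(ii). So assume $\nuf$ includes no facial site and $\nuf \cap A = \es$. Since $\ins(A)$ is bounded in the plane and $\GDd \in \sG$ has no vertex accumulation points, $\ins(A)$ contains only finitely many vertices; hence the doubly infinite path $\nuf$ lies entirely in the exterior of $A$.

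Working in the triangulation $\GDd$, I apply Lemma \ref{lem:101} iteratively from $A_0 := A$ to produce nested cycles $A_0 \subseteq \ins(A_1) \subseteq \ins(A_2) \subseteq \cdots$ with $N_{A_n} \subseteq \ol{A_{n+1}}$. A short induction on $k$ shows that any vertex at $\GDd$-graph-distance $k$ from $A$ lies in $\ol{A_k}$: the inductive step uses a neighbour at distance $k-1$, which lies in $\ol{A_{k-1}}$, together with the fact that any vertex outside $\ol{A_{k-1}}$ but adjacent to it belongs to $N_{A_{k-1}} \subseteq \ol{A_k}$. Consequently $\bigcup_n \ol{A_n} = V(\GDd)$, so there is a least $n \ge 0$ with $\nuf \cap \ol{A_n} \ne \es$. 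Exteriority forces $n \ge 1$, and I claim further that $\nuf \cap A_n \ne \es$: any vertex of $\nuf$ in $\ol{A_n} \sm \ol{A_{n-1}}$ lies either on $A_n$ or in the bounded annular region between $A_{n-1}$ and $A_n$; in the latter case the doubly infinite path $\nuf$ cannot be confined to that bounded region and must exit through a vertex of $A_{n-1}$ or $A_n$, and since $\nuf \cap A_{n-1} = \es$ by minimality, it must visit $A_n$.

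Now I apply Lemma \ref{lem:102} with cycle $A_{n-1}$: its hypotheses hold because $\nuf$ contains no facial site, $\nuf \cap N_{A_{n-1}} \ne \es$ (as $A_n \subseteq N_{A_{n-1}}$), and $\nuf \cap A_{n-1} = \es$. The lemma returns $\nu' \in \NST(\Gm)$ that either traverses a diagonal (giving~(i)) or traverses no diagonal while $\sigma(\nu') \cap A_{n-1} \ne \es$. In the latter case the index $n$ has strictly decreased, so iterating at most $n$ times either produces a diagonal-traversing path, giving~(i), or terminates at index $0$ with $\sigma(\nu') \cap A \ne \es$, giving~(ii). Each application of Lemma \ref{lem:102} alters the path on only finitely many edges and only finitely many applications are needed, so $\nu$ and the final $\ol\nu$ differ on only finitely many edges. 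The delicate point to get right is the ``penetration'' claim that $\nuf$ actually meets $A_n$, not just $\ol{A_n}$, since Lemma \ref{lem:102} is stated for the hypothesis $\nuf \cap N_{A_{n-1}} \ne \es$ and not for intersection with the larger closed region.
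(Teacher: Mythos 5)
Your overall strategy is the paper's own: construct a nested sequence of cycles by iterating Lemma \ref{lem:101} in the triangulation $\GDd$, and pull $\nuf$ inwards by repeated applications of Lemma \ref{lem:102}, the relevant index strictly decreasing at each stage; your extra care over the exhaustion of the plane and over the \lq\lq penetration'' point (that $\nuf$ meets the cycle $A_n$ itself, not merely $\ol{A_n}$) is correct and fills in details the paper glosses. However, there is a genuine gap at the initialisation $A_0:=A$. Both Lemma \ref{lem:101} and Lemma \ref{lem:102} require the cycle in question to be a cycle of the triangulation $\GDd$, and a cycle $A$ of $\GD$ need not be one, because $\GD$ is not in general a subgraph of $\GDd$. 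The reason is Lemma \ref{lem:4}(c): $\Gd$ has separating triangles in $\ST_2(\Gd)$, formed of two spokes $\langle u,\phi(F)\rangle$, $\langle v,\phi(F)\rangle$ together with an edge $\langle u,v\rangle$ of $G$ with $d_{\pd F}(u,v)\ge 2$; emptying such a triangle deletes the vertices and edges of $\pd F$ lying strictly between $u$ and $v$ on the enclosed side, and these may perfectly well belong to $\GD$ (this is exactly why $\GDd\ne\what{(\GD)}$ in general). Thus some vertices of your $A$ may be absent from $\VDd$ altogether, and then neither the first application of Lemma \ref{lem:101} to $A_0=A$ nor the final application of Lemma \ref{lem:102} with cycle $A_{n-1}=A$ is licensed. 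Your remark that $\nuf$ avoids such regions does not help: the difficulty concerns the cycle $A$, not the path.

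The missing idea is the exterior cycle of Section \ref{ssec:effective}: the paper starts the nesting at $A_0:=A'=\Ext(A)$, which it takes to be a cycle of $\GDd$ (cf.\ Remark \ref{rem:facial}; in the application of the lemma, $A=\pd F$ is a facial cycle, where this is explicitly justified), and whose vertex set is contained in that of $A$ by construction. Consequently, terminating the iteration on $A'$ still produces a vertex of $A$, and conclusion (ii) follows. With $A$ replaced by $\Ext(A)$ throughout (and the last step of your iteration performed relative to $A'$ rather than $A$), your argument becomes essentially the paper's proof; as it stands, the first step can fail.
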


\begin{proof}[Proof of Lemma \ref{lem:103} using Proposition \ref{lem:main}(b)]

Let $A'=\Ext(A)$ be the exterior cycle of $A$.
By iteration of Lemma \ref{lem:101} applied to the triangulation $\GDd$, 
there exists a sequence $A_0, A_1,A_2,\dots$ of cycles in 
$\GDd$ such that $A_0=A'$ and, for $i\ge 0$,
$A_i\subseteq \ins(A_{i+1})$ and $A_{i+1}\subseteq N_{A_i} \subseteq \ol{A_{i+1}}$.   
Since $G\in\sG$ and $A_{i}\subseteq \ins(A_{i+1})$, 
\begin{equation}
V\cap \ins(A_i) \uparrow V \qq\text{as } i\to\oo.
\label{eq:101}
\end{equation}

Let $\nu\in\NST(\Gm)$ and $\nuf=\sigma(\nu)$. If $\nu$ traverses some diagonal, we may take $\ol\nu=\nu$.
Assume that $\nu$ traverses no diagonal, so that $\nuf$ includes no facial site.

By \eqref{eq:101}, there exists $I$ 
such that $\nuf\cap A_I\ne\es$,
and we pick $I=I(\nuf)$ minimal with this property. 
If $I=0$, there is nothing to prove since $A_0=A'$ and $A'\subseteq A$.
Assume then that $I\ge 1$, and let $v\in \nuf\cap A_I$; since $\nuf$ includes no facial site, we have $v\in V$. 
By Lemma \ref{lem:102}, there exists $\nu'\in\NST(\Gm)$ such that either 
(i) $\nu'$ traverses some diagonal, or (ii)
$\nu'$ traverses no diagonal but $\nuf':=\sigma(\nu')$ satisfies $\nuf'\cap A_{I-1}\ne\es$.
If (i) holds, the proof is complete. Otherwise,
$\nuf'$ satisfies 
$I(\nuf') \le I-1$.

We continue by iteration. At each stage we could possibly 
obtain some $\ol\nu\in\NST(\Gm)$ that traverses some diagonal.
If this occurs at no stage of the iteration,  we obtain finally some
$\nu''\in\NST(\Gm)$ that traverses no diagonal, and such that $\nuf''=\sigma(\nu'')$ satisfies $I(\nuf'')=0$ and 
$\nuf'' \cap A' \ne\es$. The claim follows since $A'\subseteq A$.
\end{proof}

\begin{proof}[Proof of Theorem \ref{thm:0} using Proposition \ref{lem:main}]
Let $G\in\sG$ be such that $\GD\in\sG$ is not a triangulation, and let $F$ be a
face of $\GD$ of size $4$ or more. Let $\nu\in\NST(\Gm)$. If $\nu$ traverses some diagonal
 then the proof is complete, so we may assume that $\nu$ traverses no diagonal.
By Lemma \ref{lem:103}, there exists $\nu_1\in\NST(\Gm)$ that traverses no diagonal
but intersects $\pd F$. 
We apply Proposition \ref{lem:main}(a) to complete the proof.
\end{proof}

\section{Proof of Proposition \ref{lem:main}($\mathrm a$)}\label{sec:step2}

We begin with an outline. 
Let $G$ be as in the statement. 
Since $\GD$ is not a triangulation, it has some face $F$ of size $4$ or more (note that $F$ is
also a face of $G$).
Let $\nu$ and $v$ be as in the statement of part (a).
We shall explain how to make local changes to $\nu$ to
obtain a \dinst\ $\ol\nu$ of $\Gm$ that agrees with $\nu$ 
except on finitely many edges, and that contains some diagonal of $\pd F$. 
This will be done in the universe of \nst\ paths on the facial graph $\GDd=(\VDd,\EDd)$.
Let $\nuf=\sigma(\nu)$ be the \dinst\ of $\GDd$
corresponding to $\nu$ (see Lemma \ref{lem:4}(b) and \eqref{eq:noint2}).
We shall make local changes to $\nuf$ to obtain
a \dinst\ $\nuf_1 \in \sigma(\NST(\Gm))$ that includes the facial site $\phi(F)$. 
The path $\ol\nu=\sigma^{-1}(\nuf_1)\in\NST(\Gm)$ has the required property. 
There are a number of steps in the pursuit of this strategy, as follows.

Let $G=(V,E)$, $v$, $F$ be as above, and
let $\nu=(\dots, \nu_{-1}, \nu_0,\nu_1,\dots)\in\NST(\Gm)$ with $\nu_0 =v$;
it is sometimes convenient to think of $\nu$ as a \emph{directed} path. 
We may assume that
\begin{equation}\label{eq:nst*}
\text{$\nu$ contains no diagonal of $F$,}
\end{equation}
since otherwise there is nothing to prove.
Therefore, by Lemma \ref{lem:4}(a),
\begin{equation}\label{eq:nst**}
\text{$\nu\cap \pd F$ comprises either a single vertex of $V$ or a single edge of $E$.}
\end{equation}

Rather than working with the boundary cycle $\pd F$ of the face $F$, we shall work with its exterior cycle 
$E=\Ext(\pd F)$. The latter cycle is facial (in both $G(F)$ and $G(F)_\De$,
recall  Lemma \ref{lem:201}(b)) and we denote this face by $F'=\ins(E)$,
so that $E=\pd F'$.  
Recall from Lemma \ref{lem:201}(c) that $\Gd(F)_\De = \GDd$.
By Remark \ref{rem:facial}, we may take $\phi(F')=\phi(F)$.
Let $\nuf=\sigma(\nu)\in\NST(\GDd)$.
By \eqref{eq:nst*} and 
\eqref{eq:nst**},
\begin{gather}
\text{$\nuf$ does not contain the facial site $\phi(F')$},\label{eq:nst*f}
\\
\text{$\nuf\cap \pd F'$ comprises either a single vertex of $V$ or a single edge of $E$}.
\label{eq:nst**f}
\end{gather}

In the various steps
and figures that follow, we write
\begin{equation*}
u=\nu_{-1},\q v=\nu_0,\q w= \nu_{1}.
\end{equation*}
Represent the triple $u$, $v$, $w$ in the plane graph $\GDd$ as in Figure \ref{fig:basic},
so that $F'$ lies \lq above' the triple ($F'$ is depicted in the figure with
its facial site and incident edges removed). Let $f_i=\langle v, y_i\rangle$,
$i=1,2,\dots, r$, be the edges of $\GDd$ incident to $v$
in the sector obtained by rotating $\langle u,v\rangle$ clockwise about $v$ until it coincides with $\langle w,v\rangle$;
the $f_i$ are listed in clockwise order.  
Since $\GDd$ is simple, the $y_i$ are distinct.

For a (directed) path $\pi$ and a vertex $x\in\pi$, let $\pi(x-)$ (\resp, $\pi(x+)$) be the subpath of $\pi$
prior to and including $x$ (\resp, after and including $x$).

There are two cases to consider depending on which case of \eqref{eq:nst**f} holds 
(see Sections \ref{ssec:neither} and \ref{ssec:either}). 
If $\nuf\cap\pd F'$ is a singleton $v$ (as in Figure \ref{fig:basic}),
we denote by $y_N$ and $y_{N+1}$ the two
neighbours of $v$ lying in $\pd F'$ (in particular, we have $y_N\ne y_{N+1}$).
If $\nuf\cap \pd F'$ is an edge, we may take that edge to be $\langle v,w\rangle$, and we denote by $y_N$
the vertex of $\pd F'$ other than $w$ that is incident to $v$ (as in Figure \ref{fig:8});
in this case we have $N=r$. 

\begin{lemma}\label{lem:202}\mbox{\hfil}
\begin{letlist}
\item
Let $s_0=u$, $s_{r+1}=w$, and $s_i=y_i$ for $i=1,2,\dots,r$.
If $s_i \sim s_j$ then $\vert i-j\vert=1$. Conversely, $s_0\sim s_1\sim\cdots\sim s_N$ and
$s_{N+1}\sim\cdots\sim s_{r+1}$, where $N$ is such that $y_N$ and $y_{N+1}$ are the two neighbours
of $v$ lying in $\pd F'$.
\item
If $y_i\in \pd F'$, then $i\in\{N,N+1\}$.

\item No $y_i$ lies in $\nuf(u-)\cup \nuf(w+)$. 
\end{letlist}
\end{lemma}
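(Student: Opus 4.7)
The plan rests on two structural facts I would establish first: (i) $\GDd$ is a triangulation, since every face of $\Gd$ is a triangle by construction of the facial sites and emptying separating triangles preserves this, and (ii) $\GDd$ is $\tri$-empty by definition. With the orientation chosen so that $F'$ lies in the sector obtained by rotating $\langle u,v\rangle$ clockwise to $\langle w,v\rangle$, I would then list the cyclic clockwise order of $\GDd$-neighbours of $v$ inside this sector as $u=s_0,s_1,\dots,s_N,\phi(F'),s_{N+1},\dots,s_r,s_{r+1}=w$, with $\phi(F')$ interposed between the two cycle-neighbours $s_N,s_{N+1}$ of $v$ along $\pd F'$. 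Each of the three parts then reduces to a short argument.

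For the adjacency half of (a), I would invoke the triangulation property directly: in a plane triangulation any two cyclically-consecutive neighbours of $v$ bound a triangular face with $v$, hence are adjacent. This gives $s_i\sim s_{i+1}$ on the runs $\{0,\dots,N-1\}$ and $\{N+1,\dots,r\}$, but not at $i=N$, where $\phi(F')$ (not a $y_j$) sits between $s_N$ and $s_{N+1}$. For the non-adjacency half, I would suppose $s_i\sim s_j$ with $j\ge i+2$ and argue that $\{v,s_i,s_j\}$ is a $3$-cycle whose wedge at $v$ on the sector side encloses at least one of $s_{i+1},\dots,s_{j-1}$ (and possibly $\phi(F')$), so this $3$-cycle is separating, contradicting $\tri$-emptyness of $\GDd$. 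The main technical point I expect is the plane-geometric bookkeeping that confirms this wedge really does enclose a vertex of $\GDd$.

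For (b), I would note that $y_i\in\pd F'$ together with $y_i\in V$ forces $\langle v,y_i\rangle$ to be a $\GDd$-edge between two $V$-vertices, hence an edge of $G$; so $y_i$ is a $G$-neighbour of $v$ on $\pd F'$. Since $\pd F'$ is a simple cycle through $v$, only its two cycle-neighbours $y_N,y_{N+1}$ of $v$ along $\pd F'$ qualify; any other $G$-neighbour of $v$ on $\pd F'$ would be the far endpoint of a chord of $\pd F'$, which, since $F'$ bounds a face, lies outside $F'$ and therefore in the cyclic order around $v$ falls outside the sector containing $F'$, contradicting the definition of $f_i$. Part (c) I would then dispatch in two lines: if $y_i=\nuf_k$ with $|k|\ge 2$, the relation $y_i\sim v=\nuf_0$ in $\GDd$ forces $|k|=1$ by the \nst\ property of $\nuf$, a contradiction.
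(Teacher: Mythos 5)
Your parts (a) and (c) follow essentially the paper's own route: for (a) the paper likewise rules out a non-consecutive adjacency $s_i\sim s_j$ by noting that $(v,s_i,s_j)$ is a $3$-cycle of the triangulation $\GDd$ whose interior meets an intermediate edge at $v$ (so it would be a separating triangle, contradicting $\tri$-emptiness), and obtains the partial converse from the triangulation property; part (c) is the same one-line appeal to the \nst\ property of the path. Your reading that $\phi(F')$ is interposed between $s_N$ and $s_{N+1}$ and your explicit remark that the enclosed object may be the spoke $\langle v,\phi(F')\rangle$ are consistent with the paper's intention.

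Part (b), however, contains a genuine gap. Your key inference is that a chord $\langle v,y_i\rangle$ of $\pd F'$, being drawn outside $F'$, must ``in the cyclic order around $v$ fall outside the sector containing $F'$''. That implication is false: the sector at $v$ between $\langle u,v\rangle$ and $\langle w,v\rangle$ contains plenty of territory outside $\ol{F'}$ --- indeed all the edges $f_1,\dots,f_{N-1}$ and $f_{N+2},\dots,f_r$ lie in the sector yet outside $\ol{F'}$ --- so an edge's lying outside the face says nothing about whether its initial direction at $v$ lies in the sector. What is needed, and what the paper invokes, is the defining property of the exterior cycle $F'=\Ext(\pd F)$: every vertex of $\pd F'$ lies on $\pd F$, so any edge of $G$ joining two such vertices avoids the face $F$ and therefore, by the construction of $\Ext$, lies within $\ol{F'}$; since by Remark \ref{rem:facial} the interior of $F'$ in $\GDd$ contains only $\phi(F')$ and its incident spokes, no chord of $\pd F'$ through $v$ can exist at all, and hence $y_i\in\pd F'$ forces $i\in\{N,N+1\}$. (A purely local repair is also available: such a $y_i$ would create the $3$-cycle $(v,\phi(F'),y_i)$ of $\GDd$, each side of which contains a vertex of $\pd F'$ because $y_i$ is not a cycle-neighbour of $v$ on $\pd F'$, so it would be a separating triangle, again contradicting $\tri$-emptiness.) As written, though, your argument for (b) does not go through.
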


\begin{proof}
(a) Suppose $s_i\sim s_j$ where $j\ge i+2$. Then $(v,s_i,s_j)$ forms a $3$-cycle $T$ of 
the triangulation $\GDd$ whose interior intersects the edge $\langle v,s_{i+1}\rangle$. 
This is a contradiction since $\GDd$
is $\tri$-empty. The partial converse holds as stated since $\GDd$ is a triangulation.

(b) This holds by the definition of the exterior cycle $F'=\Ext(F)$.

(c) This follows from the fact that $\nu$ is \nst\ in $\Gm$.
\end{proof}

  \begin{figure}
 \centerline{\includegraphics[width=0.4\textwidth]{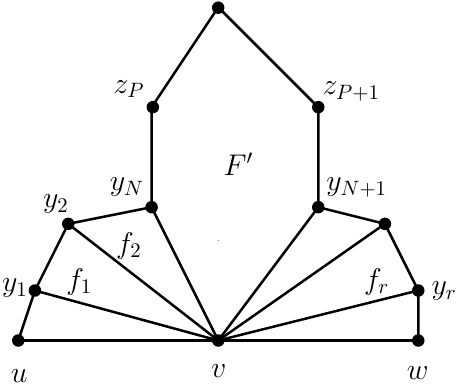}}
\caption{The path $\nuf$ passes through a vertex $v$ that lies in the boundary of a $6$-face $F'$.
This is an illustration of the case when neither $\langle u,v\rangle$ nor $\langle v,w\rangle$
lie in $\pd F'$.}
\label{fig:basic}
\end{figure}

  \begin{figure}
 \centerline{\includegraphics[width=0.6\textwidth]{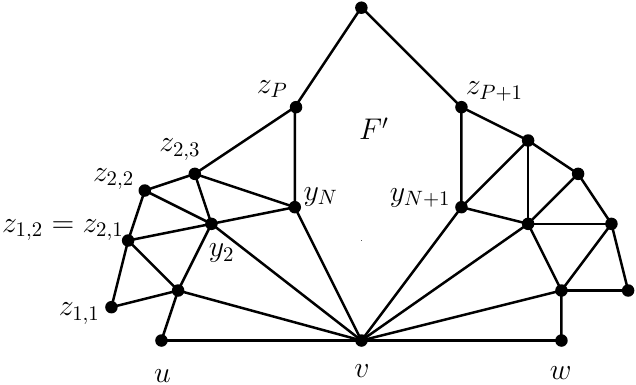}}
\caption{An illustration of the vertices $z_{i,j}$. Note that $z_{i,\de_i}=z_{i+1,1}$ for $i\ne N$, as
in \eqref{eq:edgessame}, and that any two consecutive $z_{i,j}$ 
(other than $(z_P,z_{P+1})$) close a triangle, as in \eqref{eq:edgein}.
This illustration is a simplification --- see Figure \ref{fig:basic3}.}
\label{fig:basic2}
\end{figure}

 \begin{figure}
 \centerline{\includegraphics[width=0.6\textwidth]{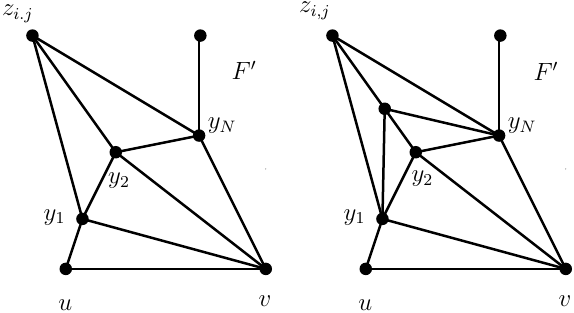}}
\caption{On the left, there is a vertex $z_{i,j}$ connected to each of $y_1,y_2,\dots,y_N$.
On the right, the relationship of this vertex  to $y_2$ is more complicated.}
\label{fig:basic3}
\end{figure}

\begin{comment}  
\begin{figure}
 \centerline{\includegraphics[width=0.5\textwidth]{}}
\caption{An illustration of the case when neither $\langle u,v\rangle$ nor $\langle v,w\rangle$
lie in $F$. The path $\nu$ passes through a vertex $v$ that lies in a $6$-face $F$. 
With $z_T\in\nu(u-)$ as given, when  $y_\alpha\nsim y_{\alpha-1}$ we may adjust $\nu$ to obtain a \nst\ path 
$\ol\nu$ passing along the diagonal $\de(v,z_t)$.}
\label{fig:4}
\end{figure}
\end{comment}

For $i=1,2,\dots,r$, denote the neighbours of $y_i$ other than possibly $u,v,y_{i-1},y_{i+1},w$ as 
$z_{i,1},z_{i,2},\dots, z_{i, \de_i}$, listed in clockwise order of the planar embedding.
More precisely, we list the edges exiting $y_i$ (other than any edges to $u, v, y_{i-1}, y_{i+1},w$) 
according to clockwise order, and we denote the other endvertex of the $j$th such edge as $z_{i,j}$.
Note that, while the $z_{i,1},z_{i,2},\dots, z_{i, \de_i}$ are distinct for given $i$
(since $\GDd$ is simple), there may generally 
exist values of $i \ne j$ 
and $1\le a\le \de_i$, $1\le b\le \de_j$  with $z_{i,a}=z_{j,b}$. 

We list the labels $z_{i,j}$ in lexicographic order (that is, $z_{a,b}<z_{c,d}$ if either $a<c$, or $a=c$ and $b<d$)
as $z_1<z_2<\dots<z_s$; this is a total order of the \emph{label-set}
but not necessarily of the underlying \emph{vertex-set} since a given vertex may occur multiple times
(if two labelled vertices $z_a$, $z_b$ satisfy $z_a=z_b$ when viewed as vertices, 
we say that each label is an \emph{image} of the other). 
If $a<b$ we speak of $z_a$ as preceding, or being to the \emph{left} of $z_b$ (and $z_b$ succeeding, or being to the
\emph{right} of $z_a$). For $1\le i\le r$, let 
\begin{equation}\label{eq:defS}
\text{$S_i = (z_{i,j}: j=1,2,\dots,\de_i)$, viewed as an ordered subsequence of $Z$.}
\end{equation}
Since $\GDd=(\VDd,\EDd)$ is a triangulation,  
\begin{equation}\label{eq:edgein}
\langle z_{i,j}, z_{i,j+1}\rangle \in \EDd, \qq j=1,2,\dots, \de_i-1,\ 1\le i\le r,
\end{equation}
and moreover
\begin{equation}\label{eq:edgessame}
z_{i,\de_i} = z_{i+1,1} \qq 1\le i <r,\ i\ne N,
\end{equation}
whenever the relevant pair of vertices is defined. 

As in Figure \ref{fig:basic2}, let $y_N, y_{N+1}$ be the two neighbours of $v$  
in $\pd F'$, and $z_P$, $z_{P+1}$ 
their further neighbours in $\pd F'$ 
(if $F'$ is a quadrilateral, we have $z_P=z_{P+1}$). 
It can be the case that $z_i\in\pd F'$ for some $i\notin\{P,P+1\}$.

See Figures \ref{fig:basic2} and \ref{fig:basic3} for
illustrations of the $z_{i,j}$.
By \eqref{eq:edgein}--\eqref{eq:edgessame}, 
\begin{equation}\label{eq:edgesin2}
\text{$\pi_u=(u,z_1,z_2,\dots,z_P)$ and $\pi_w=(z_{P+1},\dots,z_s,w)$ are walks of $\GDd$.}
\end{equation}
Note that $\pi_u$ and $\pi_w$ may contain cycles and oxbows, and may intersect one another.
Further information concerning the relationship between the $z_i$ and the $y_j$ may be
gleaned from \cite[Sect.\ 3]{Jung}.

In making changes to the path $\nuf$, it is useful to first
record which vertices lie in either $\nuf(u-)$ or $\nuf(w+)$, or in neither.
We label each vertex $x\in V$ by 
\begin{equation*}
\begin{cases} U &\text{if $x\in\nuf(u-)$},\\
W &\text{if  $x\in \nuf(w+)$},\\
Q &\text{if $x\notin \nuf(u-) \cup \nuf(w+)$}.
\end{cases}
\end{equation*}
Write $N_L$ be the number of $z_i$ with label $L$.
Since $\nu\in\NST(\Gm)$,  by \eqref{eq:nst*}
\begin{equation}\label{eq:notin1}
\text{every $x\in \pd F'$ satisfying $x\ne v,w$ is labelled $Q$.}
\end{equation}

\begin{figure}
\centerline{\includegraphics[width=0.6\textwidth]{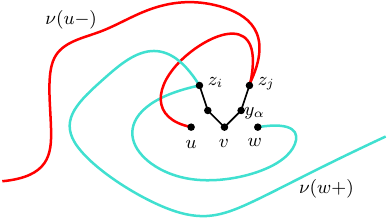}}
\caption{If $z_i\in\nuf(w+)$ and $z_j\in\nuf(u-)$ where $i<j$, 
then the pair $\nuf(u-)$, $\nuf(w+)$ fails to be \nt, and is indeed intersecting.}\label{fig:nonp}
\end{figure}

\begin{comment}
 \begin{figure}
\centerline{\includegraphics[width=0.5\textwidth]{}}
\caption{The dashed red line contains the diagonal  $\de( v,z_t)$.}\label{fig:3}
\end{figure}
\end{comment}

According to \eqref{eq:nst**} there are two cases, which we consider in order.

\subsection{Case I: Suppose $\pd F'$ contains neither of the edges $\langle u,v\rangle$, $\langle v,w\rangle$.}
\label{ssec:neither}

This case is illustrated in Figure  \ref{fig:basic}. 
Here is a technical lemma.

\begin{lemma}\label{lem:notri}\mbox{\hfil}
Suppose $N_U\ge 1$ and let $z_{\rho}=z_{\alpha,\beta}$ be the rightmost $z_i$ with label  $U$. 
Let $\nuf_{\rho}''(u-)$ be the subpath of $\nuf(u-)$ from $z_{\rho}$ to $u$,
and $\nuf_{\rho}'(u-)$ that obtained from $\nuf(u-)$ by deleting $\nuf_{\rho}''(u-)$
while retaining its endpoint $z_\rho$. 

\begin{letlist}
\item The path $\nuf_\rho''(u-)$ 
moves around $v$ in an anticlockwise direction in the sense that the directed cycle $D$ obtained
by traversing $\nuf_\rho''(u-)$  from
$z_\rho$ to $u$, followed by the edges $\langle u,v\rangle$, $\langle v,y_\alpha\rangle,
\langle y_\alpha,z_\rho\rangle$, has 
winding number $-1$. Furthermore, $\ol D \cap \nuf(w+)=\es$.

\item For $1\le i \le \rho-1$, $z_i$ is labelled either $Q$ or $U$.

\item For $1\le  i\le \rho-1$, $z_i$ has no $\GDd$-neighbour lying in $\nuf_\rho'(u-)$,
apart possibly from $z_\rho$ or one of its images. 
Moreover, for all $x\in\nuf'_\rho(u-)\sm\{z_\rho\}$,
we have that $z_i \nhsim x$.

\item 
For $1\le  i\le \rho$, $z_i$ has no $\GDd$-neighbour lying in $\nuf(w+)$.
Moreover, for all $x\in\nuf(w+)$,
we have that $z_i \nhsim x$.

\end{letlist}
\end{lemma}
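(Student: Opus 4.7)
My strategy is to study the planar topology of the cycle $D$ built in (a), and then deduce (b)--(d) by combining planarity of $\GDd$ with the \nst\ property of $\nuf$. Throughout I rely on Lemma \ref{lem:4}(a) (so that $\nuf$ is a simple, plane path in the triangulation $\GDd$) and on the fact, established by the \nst\ property together with $y_i \sim v = \nuf_0$ forcing $y_i \in \{u, w\}$, that no $y_i$ lies on $\nuf$.

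For (a), I argue by contradiction on the winding direction. Suppose $\nuf''_\rho(u-)$ wound around $v$ in the opposite sense. Then $w$, the neighbour of $v$ in the sector on the far side of $\langle v, y_\alpha\rangle$ from $u$, would lie strictly inside $\ol D$. Since $\nuf(w+)$ begins at $w$ and extends to infinity, it must leave $\ol D$; but $\nuf(w+)$ is vertex-disjoint from $\nuf''_\rho(u-)$ (distinct parts of the simple path $\nuf$) and avoids $u, v, y_\alpha, z_\rho$, and hence shares no vertex with $D$. Planarity of $\GDd$ then prevents $\nuf(w+)$ from crossing $D$, a contradiction. This forces the desired winding, and the same argument without the initial assumption $w \in \ol D$ gives $\ol D \cap \nuf(w+) = \es$. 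For (b), suppose $z_i$ were labelled $W$ for some $i < \rho$. In the cyclic arrangement of $z$-labels about $v$, the $W$-labelled $z_i$ sits to the left of the $U$-labelled $z_\rho$; as depicted in Figure \ref{fig:nonp}, this forces $\nuf(u-)$ and $\nuf(w+)$ to interlock across the fan at $v$, impossible for a simple plane path.

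For (c) and (d), I use (a) to locate $\ol D$. Writing $z_i = z_{a,b}$, each $z_i$ with $i < \rho$ lies in $\ol D$: either $a < \alpha$, in which case $y_a$ is strictly inside $\ol D$ and the edge $\langle y_a, z_i\rangle$ of $\GDd$ places $z_i$ in $\ol D$ by planarity; or $a = \alpha$ with $b < \beta$, in which case $z_i$ sits in the clockwise fan at $y_\alpha$ on the $y_{\alpha-1}$-side of $z_\rho$, hence in $\ol D$. On the other hand, $\nuf(w+)$ is outside $\ol D$ by (a), and $\nuf'_\rho(u-) \sm \{z_\rho\}$ is also outside $\ol D$: it is connected, meets $D$ only at its endpoint $z_\rho$, and extends to infinity. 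Planarity now says that any $\GDd$-neighbour of $z_i$ lying outside $\ol D$ must coincide with a vertex of $D$; among the vertices of $D$ in $\nuf'_\rho(u-)$ the only possibility is $z_\rho$ (proving (c)), while none of them lies in $\nuf(w+)$ (proving the first sentence of (d)). The case $i = \rho$ of (d) follows directly from \nst: if $z_\rho \in \nuf(u-)$ were adjacent in $\GDd$ to some $x \in \nuf(w+)$, the two would occupy positions in $\nuf$ differing by at least three. The strengthenings to $\nhsim$ in the second sentences of (c) and (d) follow by applying the same reasoning to the facial cycle $C$ of $G$ supposedly witnessing $z_i \hsim x$: $C$ would be a cycle of $\GDd$ straddling $D$ and intersecting $D$ in at least two vertices, and case analysis on these intersections (using the absence of $\phi(F')$ from $\nuf$, recall \eqref{eq:nst*f}) rules the configuration out.

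I expect the main obstacle to be the verification that every $z_i$ with $i < \rho$ really lies in $\ol D$; this requires a careful reading of the clockwise fan of edges at each $y_j$ with $j \le \alpha$ and of how $\nuf''_\rho(u-)$ relates to that fan, with due care for repeated vertices appearing as multiple labels (\emph{images}). The $\nhsim$ strengthenings in (c) and (d) will similarly require additional attention, since they involve the facial structure of $G$ rather than mere $\GDd$-adjacency.
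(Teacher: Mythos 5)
Your overall strategy coincides with the paper's: (a) and (b) are argued exactly as there (trapping the infinite path $\nuf(w+)$ inside $\ol D$, and the interlocking picture of Figure \ref{fig:nonp}), and for (c), (d) you, like the paper, reduce to a Jordan-curve consideration of the cycle $D$. However, there is a concrete gap in your (c), (d). Your key step is that every $\GDd$-neighbour of a $z_i$ with $i<\rho$ must lie in $\ol D$ (this is what your sentence ``any $\GDd$-neighbour of $z_i$ lying outside $\ol D$ must coincide with a vertex of $D$'' has to mean; as literally written it is self-contradictory, since a vertex of $D$ is not outside $\ol D$). That step is valid only when $z_i$ lies \emph{strictly} inside $\ol D$. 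By your own part (b), though, $z_i$ may be labelled $U$, in which case it lies on $\nuf(u-)$ and hence --- given your containment claim and the exteriority of $\nuf'_\rho(u-)\sm\{z_\rho\}$ --- on $\nuf''_\rho(u-)$, i.e.\ on the cycle $D$ itself. This genuinely occurs: after passing $z_\rho$, the path $\nuf(u-)$ may revisit several $z_i$ with $i<\rho$ (Figure \ref{fig:explan6}). For such $z_i$ its incident edges may leave $\ol D$ freely, and your containment argument yields nothing. For these $z_i$ the conclusions of (c) and (d) must instead be extracted from the \nst\ property: a $\GDd$-neighbour $x\in\nuf'_\rho(u-)\sm\{z_\rho\}$ or $x\in\nuf(w+)$ would be a vertex of $\nuf$ non-consecutive with $z_i$ yet adjacent to it (and, for the $\hsim$ versions, a non-consecutive pair of $\nu$ adjacent in $\Gm$), which is precisely how the paper argues (``$d_{\GDd}(x,\nuf''_\rho(u-))\le 1$ \dots\ contradicts the fact that $\nuf(u-)$ is \nst''). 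You invoke exactly this mechanism for the case $i=\rho$ of (d), so the repair uses no new idea, but as written the $U$-labelled $z_i$ with $i<\rho$ are not covered.

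A smaller remark: your $\nhsim$ strengthenings are only sketched (a ``case analysis'' on how a facial cycle witnessing $z_i\hsim x$ meets $D$); the paper is equally brief here, but the cleaner route splits as above: when $z_i$ lies on the path, appeal directly to $\nu\in\NST(\Gm)$; when $z_i$ is strictly inside $\ol D$, note that a facial cycle of $G$ containing both $z_i$ and a point outside $\ol D$ must meet $D$ in a vertex, and use \eqref{eq:nst*f} together with Lemma \ref{lem:202}(c) to exclude the possible meeting points.
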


\begin{figure}
\centerline{\includegraphics[width=0.6\textwidth]{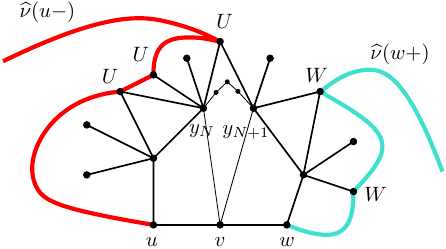}}
\caption{The path $\nuf(u-)$ intersects the $z_a$ at the rightmost $z_\rho$ and then progresses 
anticlockwise to $u$. 
Similarly $\nuf(w+)$
hits at the leftmost vertex $z_\lam$ and progresses clockwise to $w$.}\label{fig:explan6}
\end{figure}

\begin{proof}

(a)  If the given cycle has winding number $1$, then
$\nuf_\rho''(u-)$ intersects
$\nuf(w+)$ in contradiction of the definition of $\nuf$.
See Figure \ref{fig:nonp}. The final claim holds since $v,y_N \notin \nuf(w+)$.

(b) Let $1\le i\le \rho-1$.  If $z_i \in \nuf(w+)$,
then (as illustrated in Figure \ref{fig:nonp}), $\nuf(u-)$ and $\nuf(w+)$ must intersect 
(when viewed as arcs in $\RR^2$). This contradicts the planarity of $\nuf$. Therefore,
such $z_i$ is labelled either $Q$ or $U$.

(c) Let $1\le i\le \rho-1$ and  suppose $z_i$ has a $\GDd$-neighbour $x$ (with $x$ a different vertex from $z_\rho$) 
belonging to  $\nuf_\rho'(u-)$.
By a consideration of the cycle $D$ of Lemma \ref{lem:notri}(a), we have that  $d_{\GDd}(x,\nuf_\rho''(u-))
\le 1$, which (as above) contradicts
the fact that $\nuf(u-)$ is \nst\ in $\GDd$. The second statement holds similarly, since
$\nu\in\NST(\Gm)$.

(d) This is similar to (c) above. 
\end{proof}

\begin{figure}
\centerline{\includegraphics[width=0.55\textwidth]{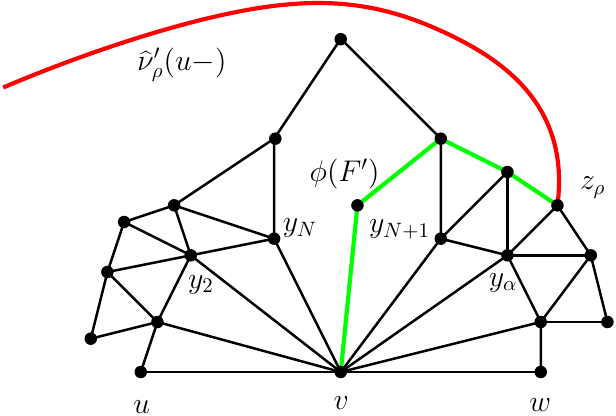}}
\caption{An illustration of $\nuf_1$ in case (A), when the rightmost
$z_i$ labelled $U$ is to the right of $z_{P+1}$.}
\label{fig:basic2n}
\end{figure}

Similar conclusions hold with $U$ replaced by $W$, 
and $z_\rho$ replaced by the leftmost $z_\lam$ in $\nuf(w+)$. 
 See Figure \ref{fig:explan6} for an illustration of Lemma \ref{lem:notri}(b)
 and some of its consequences. 
In its approach towards $u$ (from infinity) $\nuf(u-)$ passes through the rightmost $z_{\rho}$. 
It may subsequently visit one or more $z_i$ with $i<\rho$,
 but it must do this in decreasing order of suffix. Similarly, $\nuf(w+)$ passes through the 
 leftmost $z_{\lam}$ and may
 subsequently visit one or more $z_i$ with $i>\lam$ in clockwise order of suffix. 
 That $\lam>\rho$ (when these suffices are defined) holds by Lemma \ref{lem:notri}(b).
 
Let $z_\rho=z_{a,b}$ be the rightmost $z_i$ labelled $U$ (with $\rho=0$ 
and $z_0 := u$ if $N_U=0$).
Similarly, let $z_\lam=z_{c,d}$ be the leftmost $z_i$ labelled $W$ (with $\lam=r+1$ 
and $z_{r+1}:=w$ if $N_W=0$).
By the \nst\ property of $\nu$ (see also \eqref{eq:notin1}), we have
\begin{equation}\label{eq:not inF}
z_\rho \nhsim z_\lambda,\  z_\rho, z_\lambda \notin \pd  F'.
\end{equation}
For the special case when $\rho=0$ and $\lam=r+1$, we use here the fact that $v$ is not a facial site.

\smallskip\noindent
(A). \emph{Suppose  $\rho \ge P+1$.} 
Let $\alpha=\max\{i\ge N+1: z_\rho\in S_i\}$, say $z_\rho=z_{\alpha,\beta}$
(recall the set $S_i$ from \eqref{eq:defS}).
We add to $\nuf_\rho'(u-)$ the set of vertices 
\begin{equation*}
W:=\{z_{a,b}:  N+1\le a \le \alpha-1,\ 1\le b\le \de_a\} \cup
\{z_{\alpha,j}: 1\le j\le \beta-1\},
\end{equation*}
viewed as an ordered sequence of vertices from $z_\rho$ to $z_{P+1}$. It can be that some $z\in W$
with $z\ne z_{P+1}$ satisfies $z\in \pd F'$. If that holds, we find  such $z$ with greatest suffix and 
remove all elements of $W$ with lesser suffix than $z$. 
Note, in this case, that $z\notin\{y_N,v,y_{N+1},z_{P+1}\}$.
See Figure \ref{fig:basic2n}.

This yields a doubly infinite path 
$\nuf_1 = (\nuf'_\rho(u-), W', \phi(F'),v, \nuf(w-))$ of $\GDd$ where
$W'$ is obtained from $W$ by $\phi$-removal and oxbow removal. We claim that 
$\nuf_1\in\sigma(\NST(\Gm))$. To check this, it suffices to verify that there exist no $x\in(\nuf_\rho'(u-), W')$ and
$y\in\nuf(w+)$ such that $x \hsim y$. 
This follows from Lemma \ref{lem:notri}(d) 
and a consideration based on whether or not $y_\alpha$ is a facial site.

Since $\nuf_1$ includes the facial
site $\phi(F')$,  there exists
$\ol\nu=\sigma^{-1}(\nuf_1)\in\NST(\Gm)$ that traverses a diagonal of $F'$,
as required.

\smallskip\noindent
(B). \emph{Suppose $\lam \le P$.}
This is similar to Case (A).

\smallskip\noindent
(C). \emph{Suppose either $\rho= P$ or $\lam = P+1$.}
Assume $\rho=P$; the other case is similar. By \eqref{eq:not inF},
we may add $\phi(F')$ to $\nuf'_\rho(u-)
\cup\{v\}\cup \nuf(w+)$ to obtain the required \dinst\ $\nuf_1$, and hence $\ol\nu
=\sigma^{-1}(\nuf_1)$ as before.

\begin{figure}
\centerline{\includegraphics[width=0.6\textwidth]{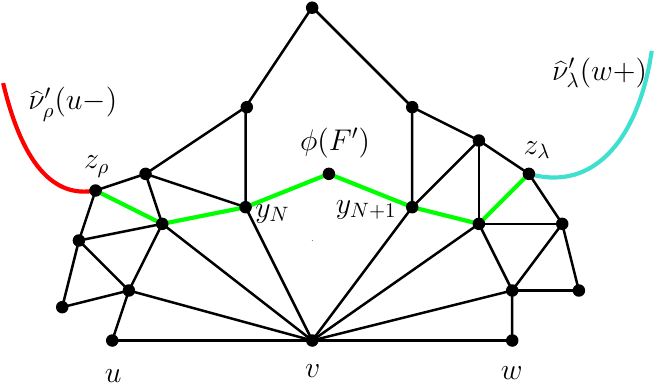}}
\caption{An illustration of $\nuf_1$ in case (D.1),
when the rightmost $U$ lies to the left and the leftmost $W$ lies to the right.}\label{fig:6}
\end{figure}

\smallskip\noindent
(D). \emph{Suppose $\rho < P$ and $\lam > P+1$.}
%Assume first that $\rho\ge 1$ and $\lam\le r$. 
Write $z_\rho=z_{\alpha,\beta}$ and $z_\lam=z_{\gamma,\delta}$ (with $\alpha=1$ if $\rho=0$,
and $\gamma=r$ if $\lam=r+1$). 
There are two cases, depending on whether or not
\begin{equation}\label{eq:601}
\exists\text{ $i$, $j$ with $\rho<i<P<P+1<j<\lambda$ such that $z_i=z_j=\phi(J)$ for some $J$.}
\end{equation}
\begin{numlist}
\item Assume \eqref{eq:601} does not hold. There is no pair $y_k$, $y_l$ with
$\alpha<k\le N$, $N+1\le l<\gamma$ that lie in the same facial cycle of $\GDd$. 
In this case we remove $\nuf_\rho''(u-)$ and $\nuf''_\lam(w+)$ and add the vertices
$y_\alpha,y_{\alpha+1},\dots,y_N$, $\phi(F')$, and $y_{N+1}, y_{N+2},\dots,y_\gamma$. 
The resulting set of vertices contains 
(after $\phi$-removal and oxbow removal) a 
\dinst\ $\nuf_1\in\sigma(\NST(\Gm))$ that includes the facial site $\phi(F')$.
The required \dinst\ of $\Gm$ is $\ol\nu:=\sigma^{-1}(\nuf_1)$.
See Figure \ref{fig:6}.

\item
Assume that \eqref{eq:601} holds and pick $i$ least and then $j$ greatest.
Write $z$ for the common vertex $z_i=z_j$ where
$z=\phi(J)$ for some face $J$ of $\GDd$. It cannot be that both
$\nuf(u-)\cap\pd J\ne\es$ and  $\nuf(w+)\cap\pd J\ne\es$, since that contradicts
$\nu\in\NST(\Gm)$; assume then that  $\nuf(w+)\cap \pd J=\es$.
See Figure \ref{fig:8}.
\begin{romlist}

\item Suppose there exists $x\in\nuf(u-)\cap \pd J$. By the planarity of $\nuf$,
it must be that $x\in\nuf'_\rho(u-)$, and we pick such $x$ earliest with this property.  We consider the walk 
\begin{equation*}
(\nuf(x-),z_i,z_{i+1}\dots,z_P,
\phi(F'), v,\nuf(w+)).
\end{equation*}
 After $\phi$-removal and oxbow removal, this becomes a \dinst\ $\nuf_1$ of 
$\GDd$ lying in $\sigma(\NST(\Gm))$. The required \dinst\ of $\Gm$ is $\ol\nu:=\sigma^{-1}(\nuf_1)$.

\begin{figure}
\centerline{\includegraphics[width=0.6\textwidth]{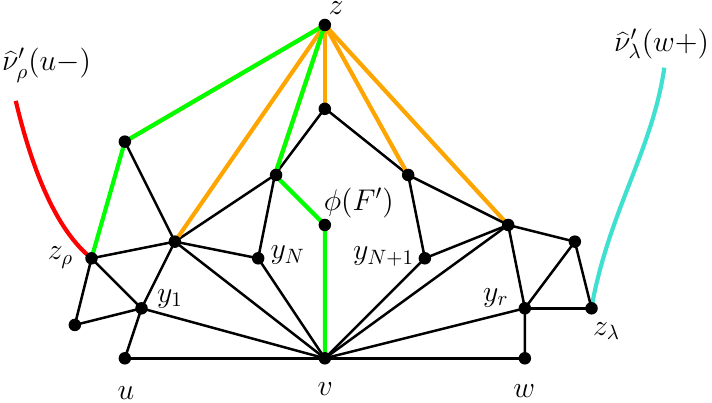}}
\caption{An illustration of $\nuf_1$ in case (D.2)(ii).  The vertex $z$ is a facial site in the face $J$, 
and is joined
to $\pd J$ by the orange edges. The additional path from $z_\rho$ to $v$ is marked in green,
and it makes use of the facial site $\phi(F')$.}\label{fig:8}
\end{figure}

\item

Suppose that $\nuf(u-)\cap \pd J=\es$.
We apply the argument of the above case to the walk
$(\nuf'_\rho(u-),z_{\rho+1},z_{\rho+2},\dots,z_P,\phi(F'), v,\nuf(w+))$.
\end{romlist}
\end{numlist}

\begin{figure}
\centerline{\includegraphics[width=0.8\textwidth]{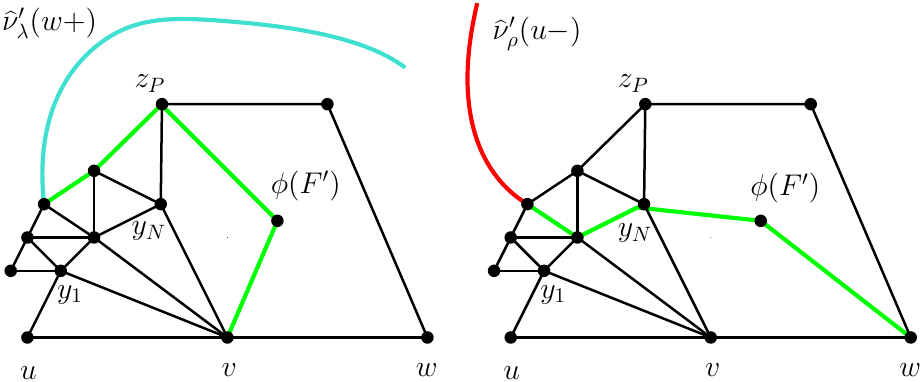}}
\caption{Illustrations of the constructions in Section \ref{ssec:either}.
\emph{Left}: When $\lam\le P$, the path $\nuf'_\lam(w+)$ followed by certain vertices 
as marked results in a 
\dinst\ including the facial site $\phi(F')$. \emph{Right}: 
When $\lam\ge P+1$, the path $\nuf'_\rho(u-)$ followed by certain vertices as marked forms a 
\dinst\ including $\phi(F')$.}\label{fig:9}
\end{figure}

\subsection{Case II: Suppose $\pd F'$ contains $\langle v,w\rangle$ but not $\langle u,v\rangle$.}
\label{ssec:either}

The argument is similar to that of Section \ref{ssec:neither}, 
and we sketch it. Let $y_1,y_2,\dots,y_N$ be
the vertices adjacent to $v$ above the triple $u,v,w$, as illustrated in Figure
\ref{fig:9}.
Let the $z_{i,j}$ be as in the last section, and let $(z_i:1\le i\le  P)$, $z_\rho$, and $z_\lam$ be given as before.

\smallskip\noindent
(E). \emph{Suppose  some $z_{i,j}$ is labelled $W$.} We proceed as in (A), (B) above.
Find the leftmost such vertex, say $z_\lam$. We delete 
$\nuf''_\lam(w+)$ from $\nuf$ and add the $z_{i,j}$ that lie between $z_\lam$ and $z_{P}$. 
This results (after $\phi$-removal and oxbow removal) in  
a \dinst\ $\nuf_1\in\sigma(\NST(\Gm))$ that includes the ordered sequence $(z_P, \phi(F'), \nuf(u-))$. 

\smallskip\noindent
(F). \emph{Suppose no $z_{i,j}$ is labelled $W$.} We proceed as in (D) above.
Find the rightmost $z_{i,j}$  labelled $U$, say $z_\rho=z_{\alpha,\beta}$
 (with $\rho=0$ if no such vertex exists). We delete $\nuf''_\rho(u-)$ and $v$ from $\nuf$
and add $y_\alpha,y_{\alpha+1}\dots,y_N$
to obtain a  \dinst\ $\nuf_1\in\NST(\Gm))$ that includes the ordered triple $(y_N,\phi(F'), w)$.

\begin{figure}
\centerline{\includegraphics[width=0.4\textwidth]{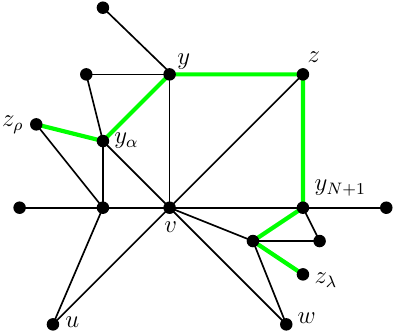}
\quad\includegraphics[width=0.4\textwidth]{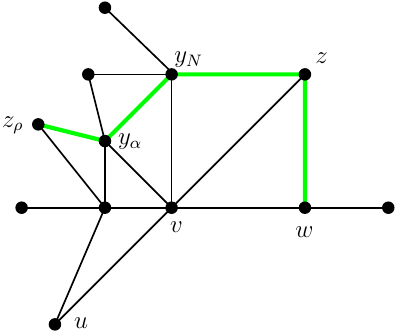}}
\caption{Illustrations of the constructions in Sections \ref{ssec:8.1} and \ref{ssec:8.2}, \resp.}
\label{fig:basic''}
\end{figure}

\section{Proof of Proposition \ref{lem:main}($\mathrm b$)}\label{sec:step3}

Let $Q$ be a $4$-cycle in $\GDd$ as in Figure \ref{fig:twosits}, 
and note that some vertices of $Q$ may be facial sites.
Let $\nuf\in\sigma(\NST(\Gm))$ be such that $v\in \nuf\cap Q$.
If $\nuf$ includes some facial site, there is nothing more to prove, and so we may assume henceforth that
\begin{equation}\label{eq:nofacial}
\text{$\nuf$ includes no facial site.}
\end{equation}
In particular, $u,v,w\in V$.

We may assume that $z\notin \nuf$, since otherwise there is nothing to prove.
In place of \eqref{eq:nst*} we have (in the notation of Figure \ref{fig:twosits})
that $\nuf\cap Q$ is one of
(i) the single vertex $v$, (ii) the single edge $\langle v,y'\rangle$, (iii) the single edge
$\langle v,y\rangle$, (iv) the two edges $\langle v,y'\rangle$, $\langle v,y\rangle$.
Case (iii) is handled as case (ii), and we proceed with cases (i), (ii), (iv) next.

\subsection{(i) Assume that $\nuf\cap Q=\{v\}$,}\label{ssec:8.1}
 and temporarily remove the edge $\langle v,z\rangle$ to obtain a $4$-face $F$ with $\pd F=Q$. 
 We shall reinstate this diagonal later.

We follow the constructions in the proof of Section \ref{ssec:neither}.
With the exception of case (D) of that section, we may take $\nuf_1$ as given there
(with the facial site $\phi(F')$ removed, so that the new path traverses the diagonal of $F$).
Either $\nuf_1$ includes some facial site or it does not, and in either case the claim follows.

We next consider case (D) with the diagonal reinstated in $F$, and see Figure \ref{fig:basic''}.
Vertices $z_\rho=z_{\alpha,\beta}$ and $z_\lambda=z_{\gamma,\delta}$ are as before.
Since $\nu$ is \nst\ and traverses no diagonal,
\begin{equation}\label{eq:610}
\nuf'_\rho(u-)\nhsim \nuf'_\lam(w+).
\end{equation}

\begin{numlist}
\item
If $z \nhsim\nuf'_\rho(u-)\cup \nuf'_\lam(w+)$,
we consider the walk 
\begin{equation*}
w=(\nuf'_\rho(u-),y_\alpha,y_{\alpha+1},\dots,y_N (=y), z,y_{N+1}(=y'), y_{N+2},\dots,y_\gamma,\nuf_\lam'(w+)).
\end{equation*}
It may that $y_i\hsim y_j$ for some $\alpha< i \le N$ and $N+1\le j<\gamma$. 
This is treated as in case (D)  of Section \ref{ssec:neither} (see \eqref{eq:601}), 
which results (after $\rho$-removal and oxbow removal) in some $\nuf_1\in\sigma(\NST(\Gm))$
including $z$. Either $\nuf_1$ includes some facial site or it does not, and in either case the claim is shown.

\item Assume $z \hsim \nuf'_\rho(u-)$ but $z \nhsim\nuf'_\lam(w+)$.
Find the earliest  $x\in\nuf'_\rho(u-)$ satisfying $x\hsim z$ (noting that $x\in V$); 
truncate $\nuf'_\rho(u-)$
at $x$ to the subpath $\nuf'(x-)$, and add the vertices $z,y_{N+1}, y_{N+2},\dots,y_\gamma$ to $\nuf_\lam'(w-)$. Let $J$ be the face such that $z, x \in\ol J$; if $z\ne \phi(J)$ and $z\nsim x$ in $G$,
we add $\phi(J)$ also.
After $\rho$-removal and oxbow removal, one obtains the required $\nuf_1$. 
It needs be checked that
\begin{equation}\label{eq:611}
\nuf'(x-) \nhsim \{y_{N+1},y_{N+2},\dots,y_\gamma\},
\end{equation}
and this holds in a similar manner to that of case (A) of Section \ref{ssec:neither}.
A similar argument holds with $u$ and $w$ interchanged.

\item Assume $z \hsim\nuf'_\rho(u-)$ and $z\hsim \nuf'_\lam(w+)$.
By \eqref{eq:610}, $z\in V$.
Find the earliest $x \in \nuf'_\rho(u-)$ satisfying $x\hsim z$, and the latest 
$y \in\nuf'_\lam(w+)$ satisfying $y\hsim z$; truncate the two paths at $x$ and $y$ respectively, and add the
vertex $z$ and any required facial site. The outcome is the required $\nuf_1$.

\end{numlist}

\subsection{(ii) Assume that $\nuf\cap Q$ is the edge $\langle v,w\rangle$,}\label{ssec:8.2}
where $w=y'$, 
and consider cases (E), (F) of 
Section \ref{ssec:either}.
In (E), we may take $\nuf_1$ to be as defined there.
Consider the second case (F)  as illustrated on the right of Figure \ref{fig:basic''}.
We follow Section \ref{ssec:8.1} above but with differences as follows.

\begin{numlist}
\item
If $z \nhsim \nuf'_\rho(u-) \cup \nuf(w+)\sm\{w\}$,
we add to $\nuf'_\rho(u-)\cup\nuf(w+)$ the vertex sequence 
$y_\alpha,y_{\alpha+1},\dots, y_N(=y), z$. If
\begin{equation}\label{eq:621}
\{y_\alpha,y_{\alpha+1},\dots, y_N\} \nhsim \nuf(w+),
\end{equation}
the resulting path $\nuf_1$ (after $\phi$-removal and oxbow removal) is as required.
If \eqref{eq:621} fails, we find the earliest $I$ such that $\alpha \le I\le N$ and $y_I\hsim\nuf(w+)$
and the latest $x\in \nuf(w-)$ such that $y_I\hsim x$. Note that $y_I,x\in V$, so that they lie in some common
cycle $J$. Now apply $\phi$-removal and oxbow removal to
the walk $(\nuf_\rho(u-), y_\alpha,\dots, y_I, \phi(J),\nuf(x-))$ 
to obtain  $\nuf_1\in\sigma(\NST(\Gm))$ that includes a facial site.

\item Assume $z \hsim\nuf'_\rho(u-)$ but $z\nhsim \nuf(w+)\sm\{w\}$.
Find the earliest $x\in \nuf'_\rho(u-)$ satisfying $x\hsim z$
(noting that $x\in V$); truncate $\nuf'_\rho(u-)$
at $x$, and add $z$ to $\nuf(w+)$ (and also the facial site $\phi(J)$ if needed, as explained above),
to obtain the required $\nuf_1$. We argue similarly with $u$ and $w$ interchanged.

\item Assume $z \hsim\nuf'_\rho(u-)$ and $z \hsim \nuf(w+)\sm\{w\}$.
Find the earliest $x \in \nuf'_\rho(u-)$ satisfying $x\hsim z$, and the latest 
$y \in \nuf(w-)$ satisfying $y\hsim z$; truncate the two paths at $x$ and $y$ respectively, and add the
vertex $z$ (possibly with facial sites as needed). The outcome is the required $\nuf_1$.

\end{numlist}

\subsection{(iv) Assume that $\nuf\cap Q$ comprises the two edges $\langle v,w\rangle$, 
$\langle v,y \rangle$,}\label{ssec:8.3}
so that $u=y$ and $w=y'$. The idea is to replace $v$ by $z$, and we proceed as above.

\begin{numlist}
\item
If $z \nhsim (\nuf'(u-)\sm\{u\}) \cup (\nuf'(w+)\sm\{w\})$,
we remove $v$ from $\nuf$ and add $z$ to $\nuf'(u-)\cup\nuf(w+)$.

\item Assume $z \hsim(\nuf'(u-)\sm \{u\})$ but $z\nhsim (\nuf'(w+)\sm\{w\})$.
Find the earliest $x\in \nuf'(u-)$ satisfying $x\hsim z$
(noting that $x\in V$); truncate $\nuf'(u-)$
at $x$, and add $z$ to $\nuf(w+)$ (and also the facial site $\phi(J)$ if needed, as explained above),
to obtain the required $\nuf_1$.
We argue similarly with $u$ and $w$ interchanged.

\item Assume $z \hsim(\nuf'(u-)\sm\{u\})$ and $z \hsim (\nuf'(w+)\sm\{w\})$.
Find the earliest $x \in \nuf'(u-)$ satisfying $x\hsim z$, and the latest 
$y \in \nuf'(w-)$ satisfying $y\hsim z$; truncate the two paths at $x$ and $y$ respectively, and add the
vertex $z$ (possibly with facial sites as needed). The outcome is the required $\nuf_1$.

\end{numlist}

\section*{Acknowledgement}

The author acknowledges discussions with Zhongyang Li concerning the problem addressed here.

\providecommand{\bysame}{\leavevmode\hbox to3em{\hrulefill}\thinspace}
\providecommand{\MR}{\relax\ifhmode\unskip\space\fi MR }
% \MRhref is called by the amsart/book/proc definition of \MR.
\providecommand{\MRhref}[2]{%
  \href{http://www.ams.org/mathscinet-getitem?mr=#1}{#2}
}
\providecommand{\href}[2]{#2}

%\bibliography{perc7}
%\bibliographystyle{amsplain}
\end{document}